\documentclass[12pt]{amsart}

\usepackage{graphicx}

\usepackage{graphicx}
\usepackage{amssymb}
\usepackage{amsfonts}
\usepackage{latexsym}
\usepackage{amscd}
\usepackage{ae,enumerate}
\usepackage[mathscr]{euscript}
\usepackage{xy} \xyoption{all}

\textwidth6.5in
\textheight9in

\hoffset=-.75in
\voffset=-.75in

\newtheorem{thm}{Theorem}[section]
\newtheorem{lem}[thm]{Lemma}
\newtheorem{rem}[thm]{Remark}

\newtheorem{cor}[thm]{Corollary}
\newtheorem{prop}[thm]{Proposition}

\newtheorem{defn}[thm]{Definition}
\newtheorem{definition}[thm]{Definition}

\theoremstyle{remark}

\numberwithin{equation}{section}




\begin{document}

\title[Prime non-primitive von Neumann regular algebras]{On prime non-primitive von Neumann regular algebras
}




\author{Gene Abrams}
\address{Department of Mathematics, University of Colorado,
Colorado Springs CO 80933 U.S.A.} \email{abrams@math.uccs.edu}
\author{Jason P. Bell}
\address{Department of Mathematics, Simon Fraser University, Burnaby, BC, Canada V5A1S6} \email{jpb@math.sfu.ca}
\author{Kulumani M. Rangaswamy}
\address{Department of Mathematics, University of Colorado,
Colorado Springs CO 80933 U.S.A.}\email{krangasw@uccs.edu}
\thanks{The first author is partially supported by the U.S. National Security Agency under grant number H89230-09-1-0066. The second author was supported by NSERC grant 31-611456.}

\subjclass[2000]{Primary 16G20, 05E10.}


\keywords{Leavitt path algebra, prime ring, primitive ring, countable separation property}

\begin{abstract}

Let $E$ be any directed graph, and $K$ any field.  We classify those  graphs $E$ for which the Leavitt path algebra $L_K(E)$ is  primitive.  As a consequence, we obtain classes of examples of von Neumann regular prime rings which are not primitive.
\end{abstract}

\maketitle

\bibliographystyle{plain}

\section{Introduction, and the Primeness Theorem}

\bigskip

The structure of prime and primitive algebras has long been the focus of much attention.   The spark for much of the interest in such structures was a question  posed by Kaplansky \cite[p. 2]{K}:  ``{\it Is a regular prime ring necessarily primitive?}"    This  question was, in its time,  ``{\it  ... the major question in the theory of von Neumann regular rings}" (\cite[p. 308]{R}).   An example of such a ring, a specific type of group algebra, was given first in 1977 by Domanov (see \cite{Dom}; see also \cite{P} for a more complete account).

 During the past few years, the {\it Leavitt path algebras} have  been actively investigated, thereby leading to ever-increasing understanding of various algebraic properties thereof. In particular, both the prime and the primitive Leavitt path algebras $L_K(E)$ associated with row-finite graphs $E$ and arbitrary fields $K$ have been identified in \cite{APPSM2}.
In the current article we provide necessary and sufficient conditions on an arbitrary directed graph $E$ which ensure that the Leavitt path algebra $L_K(E)$ is prime (Theorem \ref{primenesstheorem}), and which ensure that $L_K(E)$ is primitive (Theorem \ref{primitivitytheorem}).   As we shall see, the primeness condition in the general case matches exactly the primeness condition given in \cite{APPSM2} for row-finite graphs.  On the other hand, the primitivity of $L_K(E)$ in the general case requires an additional condition (the ``Countable Separation Property") on the graph $E$, a condition which is automatically satisfied for row-finite graphs $E$ having prime $L_K(E)$.

If $u$ and $v$ are vertices in the graph $E$, we write $u\geq v$ in case $u = v$ or there is a directed path in $E$ whose source vertex is $u$ and range vertex is $v$.  If $c = e_1e_2\cdots e_n$ is a cycle in $E$, then an {\it exit} for $c$ is an edge $f$ which is not equal to any of the $e_i$, but whose source vertex is equal to the source vertex of one of the $e_i$.   With this notation in hand, we now present our main result (Theorem \ref{primitivitytheorem}), in which we establish an easy-to-verify list of properties of $E$ which taken together are equivalent to the primitivity of $L_K(E)$.

\medskip

{\bf Theorem.} Let $E$ be an arbitrary graph, and $K$ any field.  Then $L_K(E)$ is primitive if and only if
\begin{enumerate}[(i)]
\item  For every pair of vertices $u,v$ in $E$ there exists a vertex $w$ in $E$ for which $u\geq w$ and $v\geq w$,
\item  Every cycle in $E$ has an exit, and
\item  There exists a countable set $S$ of vertices in $E$ for which, for each vertex $u$ in $E$, there exists $w\in S$ for which $u\geq w$.
\end{enumerate}
\noindent
These conditions on $E$ are called, respectively, {\it Condition (MT3)}, {\it Condition (L)}, and the {\it Countable Separation Property}.

\medskip

Conditions on the graph $E$ which imply the von Neumann regularity of $L_K(E)$ were established in \cite{AR}.   With such information in hand, Theorems  \ref{primenesstheorem} and \ref{primitivitytheorem}
 then immediately allow us to produce a bumper crop of examples of prime non-primitive von Neumann regular algebras.  Four such infinite classes of algebras (two of which consist of unital algebras) are presented in Section \ref{primenotprimitivevNrsection}.

We begin by giving a condensed reminder of the germane definitions.  For a more complete description and discussion, see for example \cite{R} and \cite{AA3}.   A ring $R$ is called {\it prime} if the product of any two nonzero two-sided ideals of $R$ is again nonzero;  {\it left primitive} if $R$ admits a simple faithful left $R$-module; and {\it von Neumann regular} (or sometimes more concisely {\it regular}) in case for each $a\in R$ there exists $x\in R$ for which $a=axa$.  It is easy to show that any primitive ring is prime.  Furthermore, in a prime ring, the intersection of any two nonzero two-sided ideals is nonzero, as the intersection contains the product.

   A \emph{(directed) graph} $E=(E^0,E^1,r,s)$ consists of two  sets $E^0,E^1$ and functions
$r,s:E^1 \to E^0$.    The elements of $E^0$ are called \emph{vertices} and the elements of $E^1$ \emph{edges}. The sets $E^0$ and $E^1$ are allowed to be of arbitrary cardinality.  A \emph{path} $\mu$ in a graph $E$ is a finite sequence of edges
$\mu=e_1\dots e_n$ such that $r(e_i)=s(e_{i+1})$ for $i=1,\dots,n-1$. In this case, $s(\mu):=s(e_1)$ is the
\emph{source} of $\mu$, $r(\mu):=r(e_n)$ is the \emph{range} of $\mu$, and $n := \ell(\mu)$ is the \emph{length} of $\mu$.
 We view the elements of $E^0$ as paths of length $0$.  If $\mu = e_1...e_n$ is a path in $E$, and if $v=s(\mu)=r(\mu)$ and $s(e_i)\neq s(e_j)$ for every $i\neq j$, then $\mu$ is called a
\emph{cycle based at} $v$.     A graph which contains no cycles is called \emph{acyclic}. An edge $e$ is an {\it exit} for a path $\mu = e_1 \dots e_n$ if there exists $i$ such
that $s(e)=s(e_i)$  and $e\neq e_i$.
If
$s^{-1}(v)$ is a finite set for every $v\in E^0$, then the graph $E$ is called \emph{row-finite}.

\medskip

\begin{definition}\label{definition}  {\rm Let $E$ be any directed graph, and $K$ any field. The
{\em Leavitt path $K$-algebra} $L_K(E)$ {\em of $E$ with coefficients in $K$} is  the $K$-algebra generated by a set
$\{v\mid v\in E^0\}$, together with a set of variables $\{e,e^*\mid e\in E^1\}$,
which satisfy the following relations:

(V)  $vw = \delta_{v,w}v$ for all $v,w\in E^0$ \  (i.e., $\{v\mid v\in E^0\}$ is a set of orthogonal idempotents).

  (E1) $s(e)e=er(e)=e$ for all $e\in E^1$.

(E2) $r(e)e^*=e^*s(e)=e^*$ for all $e\in E^1$.

 (CK1) $e^*e'=\delta _{e,e'}r(e)$ for all $e,e'\in E^1$.

(CK2)  $v=\sum _{\{ e\in E^1\mid s(e)=v \}}ee^*$ for every vertex $v\in E^0$ having $1\leq |s^{-1}(v)| < \infty$.
}

\end{definition}

\medskip


 We let $r(e^*)$ denote $s(e)$, and we let $s(e^*)$ denote $r(e)$.
If $\mu = e_1 \dots e_n$ is a path, then we denote by $\mu^*$ the element $e_n^* \dots e_1^*$ of $L_K(E)$.


Many well-known algebras arise as the Leavitt path algebra of a
 graph.
For example, the classical Leavitt $K$-algebra $L_K(1,n)$ for $n\ge 2$; the full $n\times n$ matrix algebra ${\rm M}_n(K)$ over $K$; and the Laurent polynomial
algebra $K[x,x^{-1}]$ arise, respectively, as the Leavitt path algebras of the ``rose with $n$ petals" graph $R_n$ ($n\geq 2$); the oriented line graph $A_n$ having $n$ vertices; and the ``one vertex, one loop" graph $R_1$ pictured here.

$$R_n \ =  \xymatrix{ & {\bullet^v} \ar@(ur,dr) ^{e_1} \ar@(u,r) ^{e_2}
\ar@(ul,ur) ^{e_3} \ar@{.} @(l,u) \ar@{.} @(dr,dl) \ar@(r,d) ^{e_n}
\ar@{}[l] ^{\ldots} } \ \  \ \ \ \  \ A_n \ = \ \xymatrix{{\bullet}^{v_1} \ar [r] ^{e_1} & {\bullet}^{v_2}  \ar@{.}[r] & {\bullet}^{v_{n-1}} \ar [r]
^{e_{n-1}} & {\bullet}^{v_n}} \ \ \ \ \  \ R_1 \ = \   \xymatrix{{\bullet}^{v} \ar@(ur,dr) ^x}$$

\medskip

A (possibly nonunital) ring $R$ is called a {\it ring with local units} in case for each finite subset $S\subseteq R$ there is an idempotent $e\in R$ with $S\subseteq eRe$. If $E$ is a graph for which $E^0$ is finite then we have $\sum _{v\in E^0} v$ is the multiplicative identity in $L_K(E)$; otherwise, $L_K(E)$ is a ring with a set of local units
consisting of sums of distinct vertices. Conversely, if $L_K(E)$ is unital, then $E^0$ is finite.

The Leavitt path algebra $L_K(E)$ is a
${\mathbb Z}$-graded $K$-algebra, spanned as a $K$-vector space by $ \{pq^* \mid p,q$ are paths in $E\}$.  (Recall that the elements of $E^0$ are viewed as paths of length $0$, so that this set includes elements of the form $v$ with $v\in E^0$.)
In particular, for each $n\in \mathbb{Z}$, the degree $n$ component $L_K(E)_n$ is spanned by elements of the form  $\{pq^* \mid {\rm length}(p)-{\rm length}(q)=n\}$.

\smallskip

Throughout, the word ``graph" will mean ``directed graph", and the word ``ideal" will mean ``two-sided ideal" (unless otherwise indicated).   If $u,v$ are vertices in the graph $E$, we write
$u\geq v$ in case there is a path $p$ in $E$ for which $s(p)=u, r(p)=v$.  If $p$ and $q$ are paths in $E$, we say $p$ is an {\it initial subpath} of $q$ in case there is a path $p'$ in $E$ for which $q = pp'$.   If $I$ is an ideal of $L_K(E)$, and $u,v\in E^0$ for which $u\in I$ and $u\geq v$, then $v = p^*p = p^*up \in I$.

We note a basic result about Leavitt path algebras, thereby reiterating an observation which was made in \cite[Introduction]{APRV}.

\begin{prop}\label{L(E)isoL(E)op}
Let $E$ be an arbitrary graph, and $K$ any field.  Then there is a $K$-algebra isomorphism $L_K(E)^{op} \cong L_K(E)$.
\end{prop}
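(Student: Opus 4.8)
The plan is to exhibit an explicit $K$-algebra anti-automorphism of $L_K(E)$, since an anti-automorphism of a ring $R$ is precisely an isomorphism $R \cong R^{op}$. The natural candidate is the ``star'' map determined on generators by $v \mapsto v$ for $v \in E^0$ and $e \mapsto e^*$, $e^* \mapsto e$ for $e \in E^1$, extended $K$-linearly and by the anti-multiplicative rule $(xy)^* = y^* x^*$. I would phrase this more safely as the construction of a genuine $K$-algebra homomorphism $\psi \colon L_K(E) \to L_K(E)^{op}$ via the universal property of $L_K(E)$, so that all the bookkeeping about reversed products is absorbed into the passage to the opposite algebra rather than carried out informally.

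First I would invoke the universal property: $L_K(E)$ is the $K$-algebra presented by the generators $\{v \mid v \in E^0\} \cup \{e, e^* \mid e \in E^1\}$ subject to the relations (V), (E1), (E2), (CK1), (CK2). Thus to define $\psi$ it suffices to set $\psi(v) = v$, $\psi(e) = e^*$, $\psi(e^*) = e$, viewed as elements of $L_K(E)^{op}$, and to verify that these images satisfy each defining relation when multiplication is taken in $L_K(E)^{op}$, i.e.\ in the reverse order in $L_K(E)$.

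The heart of the argument is the observation that the relation set is self-dual under order reversal. I would check the relations one at a time, recording that applying the reversal to (V) returns (V) with the two vertices interchanged; that (E1) and (E2) are carried to one another; that (CK1) returns (CK1) with the indices $e, e'$ interchanged, using $\delta_{e,e'} r(e') = \delta_{e,e'} r(e)$; and that (CK2) returns (CK2) verbatim, since the indexing set $\{e \in E^1 \mid s(e) = v\}$ is unchanged and each summand $ee^*$ is sent to $e^*\cdot e = ee^*$ after reversal. Crucially, (CK2) is imposed only at vertices $v$ with $1 \le |s^{-1}(v)| < \infty$, and this is exactly the same collection of vertices on both sides, so neither infinite emitters nor sinks create any discrepancy; this is the only place where the arbitrariness of $E$ could have interfered, and it does not. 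Hence $\psi$ is a well-defined $K$-algebra homomorphism.

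Finally I would show $\psi$ is bijective by producing its inverse. Composing $\psi$ with the analogous map defined by the same rule on generators yields a $K$-algebra endomorphism fixing every generator, hence equal to the identity, and symmetrically for the other composite; therefore $\psi$ is an isomorphism $L_K(E) \cong L_K(E)^{op}$, which gives the claim. I do not expect a genuine obstacle here: the only step demanding care is keeping the order reversal straight throughout the relation check, and it is precisely by working inside $L_K(E)^{op}$, rather than manipulating an informal anti-homomorphism, that the risk of order errors is eliminated.
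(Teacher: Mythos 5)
Your proposal is correct and follows essentially the same route as the paper: both exhibit the canonical ``star'' involution fixing vertices and exchanging $e \leftrightarrow e^*$, which is precisely an isomorphism $L_K(E) \cong L_K(E)^{op}$. The only difference is that the paper cites the literature for the existence of this involution (defined on spanning elements $\sum_i \lambda_i \alpha_i \beta_i^*$) and for the general ring-theoretic consequence, whereas you verify well-definedness directly via the universal property --- a self-contained rendering of the same argument.
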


\begin{proof}
The map $\varphi: L_K(E) \to L_K(E)$ defined by
$$\varphi:  \sum_{i=1}^n \lambda_i\alpha_i\beta_i^* \mapsto \sum_{i=1}^n \lambda_i\beta_i\alpha_i^*$$
is a $K$-linear involution on $L_K(E)$ (see e.g. \cite[Remark 3.4]{T}).  The result then follows immediately from general ring-theoretic properties (see e.g. \cite[p. 47]{F}).
\end{proof}

As a consequence of this result, throughout the sequel we drop the left/right designation, and simply refer to  the {\it primitivity} of $L_K(E)$.
Two key properties of a graph $E$ will play a significant role here.
\begin{defn}
{\rm Let $E$ be any graph.
\begin{enumerate}
\item $E$ {\it satisfies Condition (MT3)} in case  for every pair of vertices $v,w$ in $E$, there is a vertex $u$ in $E$ such that $v\geq u$ and $w\geq u$.
\item $E$ {\it satisfies Condition (L)} in case every cycle in $E$ has an exit.
    \end{enumerate}
    }
\end{defn}

 In  \cite[Proposition 5.6]{APPSM1}, necessary and sufficient conditions on the row-finite graph $E$ are presented which ensure that the Leavitt path algebra $L_K(E)$ is prime. (As is the case with many results in this area, the structure of the field $K$ plays no role in these conditions.)    We extend this result to arbitrary graphs $E$, while simultaneously giving a somewhat more streamlined argument.

\begin{thm}\label{primenesstheorem}
Let $E$ be an arbitrary graph, and $K$ any field. Then $L_{K}(E)$ is a prime ring if and only if $E$ satisfies Condition (MT3).
\end{thm}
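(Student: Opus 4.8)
The plan is to prove both implications, writing $L = L_K(E)$ throughout. I will use that $L$ is spanned by the monomials $pq^*$ (with $p,q$ paths), that $vp = p$ exactly when $s(p) = v$ (and $vp=0$ otherwise), that $q^*w = q^*$ exactly when $s(q) = w$, that $pq^* = 0$ unless $r(p) = r(q)$, and the observation recorded just before the theorem that $u \in I$ with $u \geq v$ forces $v \in I$. For the direction ``prime $\Rightarrow$ (MT3)'' I argue contrapositively. If (MT3) fails, choose vertices $v,w$ admitting no common descendant. Then $vLw$ is spanned by the elements $v\,pq^*\,w$; a nonzero such term forces $s(p)=v$, $s(q)=w$, and $r(p)=r(q)=:u$, so that $v\geq u$ and $w\geq u$, which is impossible. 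Hence $vLw = 0$, so that $\langle v\rangle\langle w\rangle = LvLwL = 0$ (each middle factor $v(\cdots)w$ lies in $vLw$) while both $\langle v\rangle$ and $\langle w\rangle$ are nonzero, and $L$ is not prime.

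For ``(MT3) $\Rightarrow$ prime,'' let $I,J$ be nonzero ideals; I must produce a nonzero element of $IJ$. The engine is the standard Reduction Theorem for Leavitt path algebras: any nonzero element may be sharpened, by left/right multiplication by suitable $\mu^*$ and $\nu$, to a nonzero scalar multiple of a vertex, or to a nonzero element of a corner $xLx \cong K[x,x^{-1}]$ arising from a cycle without an exit. Consequently each of $I,J$ contains either a vertex or a nonzero element of such a corner. In the principal case $I$ contains a vertex $u_1$ and $J$ a vertex $u_2$; (MT3) furnishes $w$ with $u_1 \geq w$ and $u_2 \geq w$, whence $w \in I$ and $w \in J$ by the hereditary behaviour noted above, and therefore $0 \neq w = w\cdot w \in IJ$.

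The genuine obstacle is the cycle-without-exit case, since a nongraded ideal may contain no vertex at all (already $(x-1) \subseteq K[x,x^{-1}] \cong L_K(R_1)$). Here I exploit the rigidity of such a cycle $c$: each of its vertices has a unique outgoing edge, so the only vertices below a cycle vertex again lie on $c$, and any path $p$ along $c$ satisfies both $p^*p = r(p)$ and $pp^* = s(p)$; hence $\alpha \mapsto p^*\alpha p$ is an isomorphism between the associated corners $\cong K[x,x^{-1}]$. If $I$ meets the corner at a cycle vertex $u_1$ and $J$ contains a vertex $u_2$, then (MT3) yields a common descendant $w$, which must lie on $c$ and satisfies $w \in J$; transporting the nonzero element of $I$ into $wLw$ gives $0 \neq h \in I \cap wLw$, and then $0 \neq h = hw \in IJ$. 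If instead both $I$ and $J$ meet cycle-without-exit corners, the common descendant $w$ lies on both cycles, and uniqueness of outgoing edges forces these cycles to coincide at $w$; thus $I \cap wLw$ and $J \cap wLw$ are nonzero ideals of the single commutative domain $wLw \cong K[x,x^{-1}]$, whose product is nonzero and contained in $IJ$. In sum, the vertex case is immediate from (MT3), and the main work is precisely this localization to a corner $\cong K[x,x^{-1}]$, where I invoke that a domain is prime after using the rigidity ($pp^* = s(p)$, unique exits) of exitless cycles to steer both ideals into a common such corner.
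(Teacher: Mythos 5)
Your proof is correct, and for the implication (MT3) $\Rightarrow$ prime it takes a genuinely different route from the paper's. The paper exploits the $\mathbb{Z}$-grading: since $\mathbb{Z}$ is an ordered group, primeness of $L_K(E)$ need only be tested on pairs of nonzero \emph{graded} ideals (\cite[Proposition II.1.4]{NvO}), and every nonzero graded ideal contains a vertex by \cite[Corollary 3.3(i)]{ABGM}; Condition (MT3) then places a common vertex $u$ in both ideals, so $0 \neq u = u^2 \in IJ$, and exitless cycles never enter the picture. You instead test arbitrary ideals, using the Reduction Theorem (cf.\ \cite[Proposition 3.1]{ABGM}, which the paper itself invokes only in its Condition-(L) form) to find in each ideal either a vertex or a nonzero element of a corner $vL_K(E)v \cong K[x,x^{-1}]$ at a vertex of an exitless cycle; your three-case analysis is sound, including the transport $\alpha \mapsto p^{*}\alpha p$ along the cycle (valid since $p^{*}p = r(p)$ always and $pp^{*} = s(p)$ by (CK2) at the cycle's unique-emitter vertices) and the observation that a common descendant of two exitless cycles forces the cycles to coincide; you should just note explicitly that the mixed case with the roles of $I$ and $J$ swapped is symmetric. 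What each approach buys: the graded argument is shorter because the hard cases vanish (nongraded ideals, which as your $K[x,x^{-1}]$ example shows may contain no vertex, never have to be confronted), at the cost of importing graded ring theory; your argument stays at the level of elements and corners, at the price of the Reduction Theorem, the corner isomorphism \cite[Lemma 1.5]{ABGM}, and the case analysis --- essentially the strategy of the original row-finite treatment in \cite{APPSM1} that the paper explicitly streamlines. Your forward direction is the paper's argument in contrapositive form: the paper derives $vRw \neq 0$ from primeness and reads the common descendant off a nonzero $v\alpha\beta^{*}w$, while you show $vLw = 0$ when no common descendant exists, whence $\langle v\rangle\langle w\rangle = 0$.
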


\begin{proof}
  Suppose $R=L_{K}(E)$ is a prime ring. Let $v,w\in
E^{0}$. Since the ideals $RvR$ and $RwR$ are nonzero, $RvRwR$, and hence $vRw$, is nonzero. This means $v\alpha \beta ^{\ast }w\neq 0$ for some real
paths $\alpha, \beta$, which in particular yields $s(\alpha )=v$ and $s(\beta )=w$. Then $u=r(\alpha )=r(\beta )$ is the desired vertex.

Now suppose that $E$ satisfies Condition (MT3).   Since $L_K(E)$ is graded by the ordered group ${\mathbb Z}$, by  \cite[Proposition II.1.4]{NvO} we see that,  to establish the primeness of $L_K(E)$, we need only show that  $IJ \neq  \{0\}$ for any pair $I,J$ of nonzero {\it graded} ideals of $L_K(E)$.  By \cite[Corollary 3.3(i)]{ABGM}, each nonzero graded ideal of $L_K(E)$ contains a vertex.
 Let $v\in I\cap E^{0}$ and $w\in J\cap E^{0}$. By Condition (MT3) there is a vertex $u$ such that $v\geq u$ and $w\geq u$. But then
$u\in I$ and $u\in J$, so that $0\neq u = u^2 \in IJ$ as desired.
\end{proof}

We note that M. Siles Molina has also independently achieved the result of Theorem \ref{primenesstheorem}.

\section{Primitive Leavitt path algebras:  the row-finite case}\label{rowfinitesection}

In this section we present necessary and sufficient conditions on a row-finite graph $E$ which ensure that the Leavitt path algebra $L_K(E)$ is primitive.

We begin by stating two very useful results; these  will be utilized not only in the current section, but indeed in all the remaining sections of this article.

\begin{lem}\label{LRSLemmas} \cite[Lemmas 2.1 and 2.2]{LRS} \
Let $K$ be any field, and let $R$ be a prime $K$-algebra.  Then there exists a prime unital $K$-algebra $R_1$ into which $R$ embeds as an ideal.  Furthermore,  $R_1$ is primitive if and only if $R$ is primitive.
\end{lem}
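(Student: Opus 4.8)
The plan is to realize $R_1$ as the Dorroh (unital hull) extension of $R$, splitting into two cases according to whether $R$ already has an identity. If $R$ is unital I simply take $R_1 = R$, and every assertion is trivial. If $R$ is non-unital, I would set $R_1 = K \oplus R$ as a $K$-vector space, with multiplication $(\lambda, a)(\mu, b) = (\lambda\mu,\, \lambda b + \mu a + ab)$ and identity $(1,0)$; then $R \cong \{0\} \oplus R$ is a two-sided ideal of the unital $K$-algebra $R_1$, giving the required embedding.

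The crux is the following observation about this $R_1$, which I would establish first: because $R$ has no identity, every nonzero two-sided ideal $I$ of $R_1$ meets $R$ nontrivially. Indeed, if $I \cap R = 0$ then any nonzero $(\lambda, a)\in I$ must have $\lambda \neq 0$ (otherwise $(0,a) \in I \cap R$), and multiplying by $(0,r)$ on either side and using $I \cap R = 0$ forces $\lambda r + ar = 0 = \lambda r + ra$ for all $r \in R$; thus $e := -\lambda^{-1}a \in R$ satisfies $er = re = r$ for all $r$, contradicting non-unitality. Granting this, primeness of $R_1$ is immediate: for nonzero ideals $I, J$ of $R_1$, the ideals $I \cap R$ and $J\cap R$ of $R$ are nonzero, so $(I\cap R)(J\cap R) \neq 0$ by primeness of $R$, whence $IJ \neq 0$.

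For the equivalence of primitivity I would argue at the level of faithful simple modules. For the direction $R$ primitive $\Rightarrow R_1$ primitive, start from a faithful simple left $R$-module $M$ (so $RM = M$) and extend the action to $R_1$ by $(\lambda, a)\cdot m = \lambda m + am$; a direct check shows this is a unital $R_1$-module structure. Since any $R_1$-submodule is a fortiori an $R$-submodule, $M$ remains simple over $R_1$; and since $\mathrm{Ann}_{R_1}(M)$ is an ideal of $R_1$ whose intersection with $R$ is $\mathrm{Ann}_R(M) = 0$, the key observation above forces $\mathrm{Ann}_{R_1}(M) = 0$, so $M$ is faithful over $R_1$. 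Conversely, given a faithful simple $R_1$-module $M$, restrict to $R$: faithfulness over $R_1$ gives $RM \neq 0$, and as $RM$ is an $R_1$-submodule (because $R$ is an ideal) simplicity yields $RM = M$. For any nonzero $m$ one has $R_1 m = M$ (as $R_1$ is unital and $M$ is simple), so $Rm \supseteq R(R_1 m) = RM = M$; thus $M$ is simple over $R$, and $\mathrm{Ann}_R(M) \subseteq \mathrm{Ann}_{R_1}(M) = 0$ gives faithfulness.

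The main obstacle, and the only genuinely delicate point, is the primeness of the hull, equivalently the observation that nonzero ideals of $R_1$ meet $R$. This is exactly where non-unitality of $R$ must be used: the naive hull $K \oplus R$ of a unital $R$ splits as $K \times R$ and is never prime, so the case distinction is essential rather than cosmetic. Once that observation is in place, both the primeness of $R_1$ and the faithfulness half of each primitivity implication follow from it uniformly, and the remaining simplicity checks are routine.
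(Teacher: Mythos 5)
Your proposal is correct, and it takes a genuinely different route from the paper, which offers no proof at all for this lemma but instead cites \cite[Lemmas 2.1 and 2.2]{LRS}; the underlying LRS construction (sketched by the authors in Section 5) is the uniform one: form the unitization $\widehat{R}=K\oplus R$, take $T=\{y\in \widehat{R} \ | \ y(0,I)=\{(0,0)\}\}$ for a nonzero ideal $I$ of $R$, show $T$ is a prime ideal of $\widehat{R}$ meeting $(0,R)$ trivially, and set $R_1=\widehat{R}/T$. Your case split replaces this quotient trick with the observation that when $R$ has no identity, every nonzero ideal of $\widehat{R}$ meets $R$ (your identity-element argument for this is correct, and it uses that the unitization is over the field $K$, which is why the characteristic pathologies of the $\mathbb{Z}$-Dorroh extension do not arise); in fact the two constructions produce the same ring, since for non-unital prime $R$ your observation shows annihilators of nonzero ideals of $\widehat{R}$ vanish, so $T=0$ and $R_1=\widehat{R}$, while for unital $R$ one gets $\widehat{R}/T\cong R$, exactly your first case. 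What your route buys is a self-contained, elementary proof including the primitivity transfer, which LRS prove separately (it is an instance of the general fact that primitivity passes between a prime ring and its nonzero ideals); what the LRS route buys is uniformity, avoiding the case distinction. One point you gloss over: in the direction ``$R$ primitive $\Rightarrow$ $R_1$ primitive,'' the formula $(\lambda,a)\cdot m=\lambda m+am$ presupposes a $K$-vector space structure on the faithful simple $R$-module $M$, which is not part of the data of a ring module. This is repairable and standard: since $M$ is simple and $RM=M$, the submodule $\{m\in M \ | \ Rm=0\}$ is zero, so one may define $\lambda\cdot m=\sum_i (\lambda a_i)m_i$ for any expression $m=\sum_i a_i m_i$; well-definedness holds because any discrepancy $x$ between two such expressions satisfies $rx=(\lambda r)\cdot 0=0$ for all $r\in R$, forcing $x=0$. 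With that one sentence added, your argument is complete.
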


\begin{lem}\label{primitivitylemma}  \cite[Theorem 1]{Form}  or \cite[Lemma 11.28]{Lam} \
A unital ring $A$  is left  primitive if and only if there
is a  left  ideal $M \neq A$ of $A$ such that for every nonzero two-sided
ideal $I$ of $A$, $M+I=A$.
\end{lem}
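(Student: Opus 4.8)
The plan is to bridge the stated ideal-theoretic condition and the module-theoretic definition of primitivity through the \emph{core} of a left ideal. For any left ideal $M$ of $A$ set $(M:A) = \{a \in A : aA \subseteq M\}$; a routine check shows this is the largest two-sided ideal of $A$ contained in $M$, and that when $M$ is a maximal left ideal the quotient $A/M$ is a simple left $A$-module whose annihilator is exactly $(M:A)$. Consequently $A$ is left primitive if and only if it possesses a maximal left ideal $M$ with $(M:A) = 0$. Both implications will be reductions to this reformulation.

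For the forward direction I would begin with a simple faithful left module $V$, choose $0 \neq v \in V$, and consider $M := \{a \in A : av = 0\}$. Since $Av = V$ by simplicity, the map $a \mapsto av$ induces an isomorphism $A/M \cong V$, so $M$ is a maximal left ideal. Given any nonzero two-sided ideal $I$, maximality forces $M + I$ to equal $M$ or $A$; the first case gives $I \subseteq M$, whence $I \subseteq (M:A) = \mathrm{ann}(V) = 0$ by faithfulness, contradicting $I \neq 0$. Hence $M + I = A$, and $M$ is the required left ideal.

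For the converse I would first replace the given $M$ by a maximal left ideal lying above it: since $A$ is unital, Zorn's Lemma yields a maximal left ideal $M' \supseteq M$, and because $M' + I \supseteq M + I = A$ for every nonzero two-sided ideal $I$, the ideal $M'$ inherits the hypothesis. Renaming, I may assume $M$ is maximal, so $V := A/M$ is simple. Its annihilator is $(M:A)$, a two-sided ideal contained in $M$; were it nonzero, taking $I = (M:A)$ in the hypothesis would give $A = M + I = M$, contradicting $M \neq A$. Therefore $(M:A) = 0$, so $V$ is faithful and $A$ is left primitive.

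The routine but essential verification underlying everything is the identification of $\mathrm{ann}(A/M)$ with $(M:A)$ together with the fact that the latter is the largest two-sided ideal inside $M$; once this is in place, each direction is short. I expect the only genuine subtlety to be the passage, in the converse, from an arbitrary left ideal satisfying the hypothesis to a maximal one, where one must confirm that the defining property is preserved under enlargement — which it is, since enlarging $M$ only enlarges $M + I$.
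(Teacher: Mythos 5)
Your proof is correct and complete: the identification of $\mathrm{ann}(A/M)$ with $(M{:}A)$ as the largest two-sided ideal inside $M$, the forward direction via the annihilator of a cyclic generator of a faithful simple module, and the passage to a maximal left ideal via Zorn's Lemma in the converse (noting the hypothesis is preserved under enlargement) are all carried out accurately. The paper itself gives no proof of this lemma — it simply cites Formanek and Lam — and your argument is essentially the standard one found in the cited reference \cite[Lemma 11.28]{Lam}, so there is no divergence to report.
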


Using these two results, we are already in position to establish the row-finite case.  As it turns out, we will also be able to establish the row-finite result as a specific case of Theorem \ref{primitivitytheorem}. However, the argument in the row-finite case provides clarification and insight to the general case, so we present it separately.

\begin{thm}\label{primitiverowfinite}
Let $E$ be a row-finite graph, and $K$ any field. Then $R=L_{K}(E)$ is  primitive
 if and only if

(i) $E$ satisfies Condition (MT3), and

(ii) $E$ satisfies Condition (L).
\end{thm}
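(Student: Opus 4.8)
The plan is to prove the two implications separately, reducing throughout to the unital prime algebra $R_1$ furnished by Lemma \ref{LRSLemmas}. For \emph{necessity}, suppose $R=L_K(E)$ is primitive. Then $R$ is prime, so Condition (MT3) follows at once from Theorem \ref{primenesstheorem}. To force Condition (L) I would argue by contradiction: if some cycle $c=e_1\cdots e_n$ based at $v$ has no exit, then at each vertex $s(e_i)$ relation (CK2) reads $e_ie_i^*=s(e_i)$, and a short telescoping collapse gives $cc^*=v=c^*c$. Thus $c$ is a unit in the corner $vRv$ with inverse $c^*$, and since every path issuing from $v$ must run along the exit-free cycle, one obtains $vRv\cong K[x,x^{-1}]$. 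By Lemma \ref{LRSLemmas} it is enough to contradict primitivity of the unital ring $R_1$, where $vR_1v=vRv$. But the corner of a unital primitive ring at a nonzero idempotent is again primitive, and a commutative primitive ring is a field (its faithful simple module forces the maximal annihilating ideal to vanish); since $K[x,x^{-1}]$ is not a field, this is the required contradiction, so every cycle has an exit.

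For \emph{sufficiency}, assume (MT3) and (L). By Lemma \ref{LRSLemmas} it suffices to prove the unital prime algebra $R_1$ is primitive, and I would do this through Lemma \ref{primitivitylemma}, by exhibiting a proper left ideal $M$ with $M+I=R_1$ for every nonzero two-sided ideal $I$. Two facts drive the construction. First, Condition (L) forces every nonzero ideal of $R$ to contain a vertex (the Cuntz--Krieger-type uniqueness phenomenon, together with \cite[Corollary 3.3(i)]{ABGM}); since $R$ is an essential ideal of the prime ring $R_1$, every nonzero ideal of $R_1$ meets $R$ and hence also contains a vertex. So it is enough to verify $M+\langle v\rangle=R_1$ for every $v\in E^0$, where $\langle v\rangle$ is the ideal of $R_1$ generated by $v$. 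Second — and here row-finiteness is exactly what is needed — the Countable Separation Property is automatic: fixing any vertex $a$, its descendant set $T(a)=\{x:a\geq x\}$ is countable (row-finiteness gives finitely many paths of each length out of $a$) and is cofinal, because by (MT3) every vertex $u$ shares with $a$ a common descendant, which lies in $T(a)$. Enumerating $T(a)$ and repeatedly applying (MT3) then yields a descending cofinal chain $v_1\geq v_2\geq\cdots$ of vertices.

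Choosing paths $q_n$ from $v_1$ to $v_n$ with $q_n^*q_n=v_n$, I would set $f_n:=q_nq_n^*$, obtaining a descending chain of idempotents $f_1\geq f_2\geq\cdots$ in $R_1$, and define $M:=\{x\in R_1: xf_n=0 \text{ for some } n\}$. Nestedness of the $f_n$ makes $M$ a left ideal closed under addition, and it is proper since $1\cdot f_n=f_n\neq 0$. To complete the verification, given any vertex $v$ cofinality provides $N$ with $v\geq v_N$; a connecting path $t$ then gives $v_N=t^*t\in\langle v\rangle$, whence $f_N=q_Nv_Nq_N^*\in\langle v\rangle$. Since $(1-f_N)f_N=0$ we have $1-f_N\in M$, so $1=(1-f_N)+f_N\in M+\langle v\rangle$. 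Thus $M+\langle v\rangle=R_1$ for every vertex $v$, hence $M+I=R_1$ for every nonzero ideal $I$, and Lemma \ref{primitivitylemma} shows $R_1$, and therefore $R$, is primitive.

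The conceptual heart, and the step I expect to require the most care, is the sufficiency construction of $M$: one must simultaneously use (L) to collapse the Formanek test to single vertices, use row-finiteness together with (MT3) to manufacture a genuinely cofinal descending chain of idempotents, and then check that the resulting left ideal is proper yet is completed to all of $R_1$ by the ideal generated by any one vertex. The remaining delicate-but-routine verifications are the isomorphism $vRv\cong K[x,x^{-1}]$ in the necessity direction and the bookkeeping that $M$ is genuinely a proper left ideal.
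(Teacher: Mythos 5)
Your proposal is correct and follows essentially the same route as the paper's proof: the Lanski--Resco--Small embedding (Lemma \ref{LRSLemmas}) combined with Formanek's criterion (Lemma \ref{primitivitylemma}), with row-finiteness and Condition (MT3) producing a countable cofinal descending chain and nested idempotents $f_n = q_nq_n^*$ --- indeed your eventual-annihilator ideal $\{x \in R_1 \mid xf_n = 0 \text{ for some } n\}$ coincides with the paper's $M = \sum_{n} R_1(1-\lambda_n\lambda_n^*)$, since $(1-f_n)(1-f_{n+1}) = 1-f_n$ makes that sum an increasing union --- and the necessity direction (primeness gives (MT3); an exit-free cycle gives the non-primitive corner $vRv \cong K[x,x^{-1}]$) is the paper's argument verbatim. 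The one point you should make explicit is that the paths $q_n$ must be chosen compatibly, each $q_{n+1}$ extending $q_n$ by a connecting path from $v_n$ to $v_{n+1}$, since for arbitrary choices of paths from $v_1$ to $v_n$ the idempotents $f_n$ need not be nested; your later appeal to ``nestedness of the $f_n$'' tacitly assumes this choice.
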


\begin{proof}

First, suppose $E$ satisfies the two conditions.   By Theorem \ref{primenesstheorem}, the (MT3) Condition yields that $L_K(E)$ is prime.  Using Lemma \ref{LRSLemmas}, we embed $L_K(E)$ as a two-sided ideal in a prime $K$-algebra $L_K(E)_1$, and establish the primitivity of $L_K(E)$ by establishing the primitivity of $L_K(E)_1$.    Let $v$ be any vertex in $E$, and let $T(v) = \{u\in E^0 \ | \ v\geq u\}$.  Since $E$ is row-finite, the set $T(v)$ is at most countable. So we may label the elements of $T(v)$ as $\{v_1, v_2, ... \}$.
We
inductively define a sequence  $\lambda_1, \lambda_2, ...$ of paths in $E$ for which, for each $i\in \mathbb{N}$,
\begin{enumerate}
 \item $\lambda_i$ is an initial subpath of $\lambda_j$ whenever $i\leq j$, and
       \item  $v_i \geq r(\lambda_i)$.
           \end{enumerate}

To do so, define $\lambda_1 = v_{1}$.  Now suppose $\lambda_1, ..., \lambda_n$ have been defined with the indicated properties for some $n\in \mathbb{N}$.   By Condition (MT3), there is
a vertex $u_{n+1}$ in $E$ for which $r(\lambda_n) \geq u_{n+1}$ and $v_{n+1}\geq u_{n+1}$.
 Let $p_{n+1}$ be a path from $r(\lambda_n)$ to $u_{n+1}$, and define $\lambda_{n+1}=\lambda_{n}p_{n+1}$.    Then $\lambda_{n+1}$ is clearly seen to have the desired properties.

 Since $\lambda_i$ is a subpath of $\lambda_t$ for all $t\geq i$, we get that
 $$\lambda_i\lambda_i^{\ast}\lambda_t\lambda_t^{\ast} = \lambda_t\lambda_t^{\ast} \ \ \mbox{for each pair of positive integers } \ t\geq i.$$
 Now define the left $L_K(E)_1$-ideal $M$ by setting $$M=\sum\limits_{i=1}^{\infty }L_K(E)_1(1-\lambda_{i}\lambda_{i}^{\ast }).$$
 We first claim that $M\neq L_K(E)_1$.  On the contrary,  suppose $1 \in M$. Then there exists $n\in \mathbb{N}$ and $r_1,...,r_n \in L_K(E)_1$ for which  $1 = \sum_{i=1}^n r_i(1-\lambda_i \lambda_i^{\ast})$.  Multiplying this equation on the right by $\lambda_n \lambda_n^{\ast}$, and using the displayed observation, yields $\lambda_n \lambda_n^{\ast} = 0$.  But this is impossible, since it would imply $0 = \lambda_n^{\ast} \lambda_n \lambda_n^{\ast} \lambda_n = r(\lambda_n)$.  Thus $M$ is indeed a proper left ideal of $L_K(E)_1$.

 We now verify
that $M$ is comaximal with every nonzero two-sided ideal $I$ of $L_K(E)_1$.    Since $L_K(E)_1$ is prime and $L_K(E)$ embeds in $L_K(E)_1$ as a two-sided ideal,  we have $I\cap L_K(E)$ is a nonzero two-sided ideal of $L_K(E)$.    So   Condition
(L) on $E$, together with \cite[Corollary 3.3(ii)]{ABGM}, implies that $I$ contains some vertex, call it $w$.  By Condition (MT3) there exists $u\in E^0$ for which $v\geq u$ and $w\geq u$.  But $v\geq u$ gives by definition that $u=v_n$ for some $n\in \mathbb{N}$, so that $w\geq v_n$.   By the construction of the indicated sequence of paths $v_n \geq r(\lambda_n)$, so that there is a path $q$ in $E$ for which $s(q)=w$ and $r(q)=r(\lambda_n)$.  Since $w\in I$ this gives $r(\lambda_n) \in I$, so that $\lambda_n\lambda_n^{\ast} = \lambda_n r(\lambda_n) \lambda_n^{\ast} \in I$.  But then $1 = (1 - \lambda_n\lambda_n^{\ast}) + \lambda_n \lambda_n^{\ast} \in M + I$, so that $M+I = R$ as desired.

Thus $M$ satisfies the  conditions of Lemma \ref{primitivitylemma}, which yields the left primitivity of $L_K(E)_1$, and with it, by Lemma \ref{LRSLemmas}, the primitivity of $L_K(E)$.

\smallskip

Conversely, suppose $R=L_{K}(E)$ is  primitive. Since $R$
is then in particular a prime ring, $E$ has Condition (MT3)  by Theorem \ref{primenesstheorem}.  We argue by contradiction that $E$ has Condition (L) as well, for, if not, there is a cycle $c$
based at a vertex $v$ in $E$ having no exits. But then   by \cite[Lemma 1.5]{ABGM}, the corner ring $vRv\cong K[x,x^{-1}]$
 is not primitive (as a commutative primitive ring must be a field). Since a corner of a primitive ring must again
be primitive (for if $M$ is a faithful simple left $R$-module, then $eM$ is a faithful simple left $eRe$-module), we reach the desired  contradiction, and the result follows.
\end{proof}

\begin{rem}
{\rm The result appearing directly above as Theorem \ref{primitiverowfinite} in fact appears as Theorem 4.6 of \cite{APPSM2}.   However, the the proof of \cite[Theorem 4.6]{APPSM2} relies on \cite[Propositions 4.4 and 4.5]{APPSM2}, and these latter two results are in fact incorrect.   Briefly paraphrased, the statement of \cite[Proposition 4.4(i)]{APPSM2} asserts that if $u\geq v$ are vertices, then the sets $S_u$ and $S_v$ of simple factors of $uL_K(E)$ and $vL_K(E)$ (respectively) are equal, and, moreover, \cite[Proposition 4.4(ii)]{APPSM2} claims that the simple factor $uL_K(E)/J$ is isomorphic to $vL_K(E)/\alpha^*J$ (where $\alpha$ is a path from $u$ to $v$).   Furthermore,  \cite[Proposition 4.5]{APPSM2} proposes that if $u$ is a vertex which emits at least two edges, and $M$ is a simple factor of the right ideal $uL_K(E)$, then for some edge $e$ emitted by $u$ there is an isomorphism $M\cong r(e)L_K(E)/e^*L_K(E)$.    But consider the graph
$$\xymatrix{{\bullet}^{v_1}  & & {\bullet}^{v_2} \\
 & {\bullet}^{u} \ar[ul]_{e_1} \ar[ur]^{e_2} & } $$
For each $i=1,2$ it is not hard to show that $v_i = r(e_i)$ has the property that $v_iL_K(E)$ is a simple right $L_K(E)$-module, and, furthermore,  that $e_i^*L_K(E) = v_iL_K(E)$.  Thus the quotients $r(e_i)L_K(E)/e_i^*L_K(E)$ are each zero.  This clearly violates both of the statements of \cite[Proposition 4.4(ii) and 4.5]{APPSM2}.  Furthermore, in this case we have $| \ S_u \ | = 2$, while $| \ S_{v_i} \ | = 1$ for $i=1,2$, thus violating  \cite[Proposition 4.4(i)]{APPSM2}.
}
\end{rem}

\section{Primitive Leavitt path algebras for arbitrary graphs: \\ sufficient conditions}

In this section we present three conditions on an arbitrary graph $E$ which, taken together,  imply the primitivity of  the Leavitt path algebra $L_K(E)$  for any field $K$.  We start by defining  a property of graphs which will play a key role in determining the primitivity of Leavitt path algebras of not necessarily row-finite graphs.   As motivation for this property, we observe that the proof of the sufficiency portion of the row-finite case (Theorem \ref{primitiverowfinite})  reveals that,   for any $v\in E^0$,  the set $T(v)$ is an at most countable subset of $E^0$ such that, for every $w\in E^0$, there is some $v_i \in T(v)$ for which $w \geq v_i$.

\begin{defn} {\em Let $E$ be a  graph and let $S\subseteq E^0$.  We say that $S$ has the \emph{Countable Separation Property} (more concisely, \emph{CSP}) if there is an at most countable set of vertices $C(S) = \{v_1,v_2,\ldots\} \subseteq E^0$ such that, for every $w\in S$, there is some $v_i\in C(S)$ for which $w \geq v_i$.    \hfill $\Box$ }
\end{defn}

\begin{rem}\label{CSPforcountablegraphremark}
{\rm Let $E$ be a graph for which $E^0$ is at most countable.  Then every subset $S$ of $E^0$ has CSP (simply use $C(S) = E^0$). \hfill $\Box$

}
\end{rem}

\begin{lem}\label{rowfiniteMT3givesCSPlemma}
Let $E$ be a row-finite graph which satisfies Condition (MT3).  Then $E^0$ has CSP.
\end{lem}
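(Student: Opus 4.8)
The plan is to exploit Condition (MT3) directly, producing a single at-most-countable set that separates all of $E^0$. If $E^0$ is empty the statement is vacuous, so I may fix an arbitrary vertex $v \in E^0$. I would then take as my candidate separating set
$$C(E^0) := T(v) = \{u \in E^0 \mid v \geq u\},$$
the set of all vertices reachable from $v$ --- precisely the set singled out in the motivating remark preceding the definition of CSP. Two points then need to be verified: that $C(E^0)$ is at most countable, and that it separates every vertex of $E$.

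For countability I would invoke row-finiteness. For each $n \geq 0$, the set of paths in $E$ of length $n$ with source $v$ is finite, since at each of the $n$ steps there are only finitely many edges to extend by (each vertex emits finitely many edges). Hence the collection of all finite paths emanating from $v$ is a countable union of finite sets, and is therefore at most countable. Since $T(v)$ is the image of this collection under the range map $r$ (together with $v$ itself, viewed as a path of length $0$), it follows that $T(v)$ is at most countable.

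For the separation property, let $w \in E^0$ be arbitrary. Applying Condition (MT3) to the pair $v, w$ yields a vertex $u$ with $v \geq u$ and $w \geq u$. The relation $v \geq u$ says exactly that $u \in T(v) = C(E^0)$, while $w \geq u$ is precisely the separation condition required by CSP. As $w$ was arbitrary, $C(E^0)$ witnesses that $E^0$ has CSP, completing the argument.

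The whole proof is essentially immediate once the correct set $T(v)$ is identified; the only step requiring genuine care --- the ``main obstacle,'' such as it is --- is the countability of $T(v)$, which is where row-finiteness enters in an essential way. Without row-finiteness, $T(v)$ could well be uncountable and this particular construction would break down, which is exactly why the general (non-row-finite) case must impose CSP as a separate hypothesis rather than deriving it from Condition (MT3).
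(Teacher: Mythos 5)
Your proof is correct and follows essentially the same route as the paper: fix a vertex $v$, take $C(E^0) = T(v)$, use row-finiteness to conclude $T(v)$ is at most countable, and use Condition (MT3) for the separation property. You merely spell out the countability argument (paths of each length form a finite set) that the paper leaves implicit.
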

\begin{proof}  For any $v\in E^0$, consider the set $T(v) = \{u\in E^0 \ | \ v\geq u\}$.  As $E$ is row-finite, $T(v)$ is at most countable.  But then the (MT3) condition yields that $T(v)=C(E^0)$ has the desired property.
\end{proof}

\begin{defn}\label{EsubTsubXdefinition}
{\rm Let $X$ be any nonempty set, and let $\mathcal{F}(X)$ denote the collection  of  nonempty finite  subsets of $X$.   We  define the  graph $E_{\mathcal{F}(X)}$ as follows:
$$E_{\mathcal{F}(X)}^0 \ = \ \mathcal{F}(X), \ \ \
E_{\mathcal{F}(X)}^1 = \{e_{A,A'} \ | \ A,A'\in \mathcal{F}(X), \ \mbox{and} \ A\subsetneqq A'\},$$
  $s(e_{A,A'})=A$, and $r(e_{A,A'})=A'$ for each $e_{A,A'} \in E_{\mathcal{F}(X)}^1.$
 \hfill  $\Box$  }
\end{defn}

 Using  Remark \ref{CSPforcountablegraphremark} along with standard cardinal arithmetic, we immediately get

\begin{prop}\label{EsubTsubXhasCSPiffXcountableprop}
Let $X$ be any set, and let $E_{\mathcal{F}(X)}$ be the graph described in Definition \ref{EsubTsubXdefinition}.  Then $E_{\mathcal{F}(X)}^0$ has CSP if and only if $X$ is at most countable.
\end{prop}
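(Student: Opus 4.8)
The plan is to prove the biconditional in Proposition \ref{EsubTsubXhasCSPiffXcountableprop} by handling the two directions separately, with the forward direction (CSP implies $X$ countable) being the substantive one. First I would dispose of the easy converse: if $X$ is at most countable, then $\mathcal{F}(X)$ is a countable union of the sets of $n$-element subsets of $X$, each of which is at most countable, so $E_{\mathcal{F}(X)}^0 = \mathcal{F}(X)$ is at most countable. By Remark \ref{CSPforcountablegraphremark} every subset of a countable vertex set has CSP, so in particular $E_{\mathcal{F}(X)}^0$ does. This is precisely the ``standard cardinal arithmetic'' alluded to just before the statement.

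For the forward direction, I would argue by contrapositive: assuming $X$ is uncountable, I will show $E_{\mathcal{F}(X)}^0$ fails CSP. The key structural observation is that in this graph one has $A \geq B$ (for $A,B \in \mathcal{F}(X)$) if and only if $A \subseteq B$: a directed edge goes from a finite set to any strictly larger finite set, so a directed path from $A$ realizes an increasing chain of finite subsets, and its range must contain $A$; conversely if $A \subsetneqq B$ there is a single edge $e_{A,B}$, and if $A=B$ we use the length-zero path. I would establish this equivalence first, since everything downstream depends on translating the graph-theoretic relation $\geq$ into set-inclusion.

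With that translation in hand, suppose toward a contradiction that $E_{\mathcal{F}(X)}^0$ has CSP, witnessed by a countable set $C = \{A_1, A_2, \ldots\}$ of finite subsets of $X$. The defining property of $C$ says: for every $W \in \mathcal{F}(X)$ there is some $A_i \in C$ with $W \geq A_i$, i.e. $W \subseteq A_i$. Now let $Y = \bigcup_{i} A_i$; this is a countable union of finite sets, hence at most countable, so since $X$ is uncountable there exists a point $x \in X \setminus Y$. Consider the singleton $W = \{x\} \in \mathcal{F}(X)$. The CSP property demands some $A_i$ with $\{x\} \subseteq A_i$, forcing $x \in A_i \subseteq Y$, contradicting $x \notin Y$. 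Hence no countable $C$ can work and $E_{\mathcal{F}(X)}^0$ fails CSP.

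The main obstacle — really the only place demanding care — is the characterization of $\geq$ as reverse inclusion, and in particular getting the direction of inclusion right: because edges point from smaller sets to larger sets, reachability $A \geq B$ corresponds to $A \subseteq B$, so the singletons $\{x\}$ are the ``sources'' that are hard to dominate from below by a countable family. Once that is pinned down, the counting argument is immediate. I would be careful to note that $C(S)$ in the definition need not be a subset of $S$, but this causes no trouble: I only use that each witness $A_i$ lies in $E^0 = \mathcal{F}(X)$ and is therefore finite, which is all the union-is-countable step requires.
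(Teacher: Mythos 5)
Your proof is correct and is essentially the paper's own argument: the paper handles the countable case via Remark \ref{CSPforcountablegraphremark} and dismisses the uncountable case as ``standard cardinal arithmetic,'' which is precisely your observation that $A\geq B$ in $E_{\mathcal{F}(X)}$ means $A\subseteq B$, so a countable witnessing family $\{A_i\}$ could only serve vertices contained in the countable set $\bigcup_i A_i$ --- the same reasoning the paper itself illustrates with the $X=\mathbb{R}$ example. (One cosmetic quibble: your closing phrase ``dominate from below'' inverts the picture --- a singleton $\{x\}$ needs a witness \emph{containing} it, i.e., above it in inclusion --- but the argument itself uses the correct direction throughout.)
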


For instance,  if $X = \mathbb{R}$, then $\mathcal{F}(X)$ consists of the finite nonempty subsets of $\mathbb{R}$, and the resulting graph $E_{\mathcal{F}(\mathbb{R})}$ does not have the Countable Separation Property (as no countable union of finite subsets of $\mathbb{R}$ can contain all of $\mathbb{R}$.)

\begin{defn}\label{Esubkappadefinition}
{\rm Let $\kappa$ be any ordinal.  We  define the graph $E_{\kappa}$ as follows:
$$E_{\kappa}^0 \ = \{\alpha \ | \ \alpha < \kappa\}, \ \ \ E_{\kappa}^1 = \{e_{\alpha,\beta} \ | \ \alpha, \beta < \kappa, \ \mbox{and} \ \alpha < \beta\},$$
  $s(e_{\alpha,\beta})=\alpha$, and $r(e_{\alpha,\beta})=\beta$ for each $e_{\alpha,\beta} \in E_{\kappa}^1.$
 \hfill  $\Box$  }
\end{defn}

Recall that an ordinal $\kappa$ is said to have {\it countable cofinality} in case $\kappa$ is the limit of a countable sequence of ordinals strictly less than $\kappa$.   For example, any countable ordinal has countable cofinality.  The ordinal $\omega_1$ does not have countable cofinality, while the ordinal $\omega_\omega$ does have this property.   With this definition in mind, the following result is clear.

\begin{prop}\label{EsubkappahasCSPiffkappacountablecofinalityprop}
Let $\kappa$ be any ordinal, and let $E_{\kappa}$ be the graph described in Definition \ref{Esubkappadefinition}.  Then $E_{\kappa}^0$ has CSP if and only if $\kappa$ has countable cofinality.
\end{prop}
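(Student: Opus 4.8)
The plan is to unwind both directions through the definition of CSP directly in terms of the order structure on the graph $E_{\kappa}$, whose reachability relation is simply the ordinal order: for vertices $\alpha,\beta$ with $\alpha \neq \beta$, we have $\alpha \geq \beta$ in $E_\kappa$ precisely when there is an edge $e_{\alpha,\beta}$, i.e. when $\alpha < \beta$ (and of course $\alpha \geq \alpha$ always). The first step is to record this translation, since it is the engine of both implications: reachability to a vertex $\beta$ from $\alpha$ means $\alpha \leq \beta$ as ordinals.

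First I would prove the ``if'' direction. Suppose $\kappa$ has countable cofinality, witnessed by a countable sequence $\alpha_1 < \alpha_2 < \cdots$ of ordinals strictly below $\kappa$ with supremum $\kappa$. I claim $C(E_\kappa^0) = \{\alpha_1,\alpha_2,\ldots\}$ separates. Given any vertex $w = \gamma < \kappa$, cofinality gives some $\alpha_i > \gamma$, and then the edge $e_{\gamma,\alpha_i}$ witnesses $\gamma \geq \alpha_i$, so $w \geq \alpha_i \in C(E_\kappa^0)$. Hence $E_\kappa^0$ has CSP. (One should also dispatch the trivial case where $\kappa$ is a successor or zero; if $\kappa = \beta + 1$ is a successor then $\{\beta\}$ is a singleton separating set since every $\gamma \leq \beta$ reaches $\beta$, and $\kappa$ indeed has countable cofinality. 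The case $\kappa = 0$ gives the empty graph, which vacuously has CSP.)

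For the converse I argue the contrapositive. Suppose $\kappa$ does \emph{not} have countable cofinality, and let $C = \{v_1,v_2,\ldots\}$ be any at most countable subset of $E_\kappa^0 = \{\alpha : \alpha < \kappa\}$; I must exhibit a vertex $w$ reaching no element of $C$. Each $v_i$ is an ordinal $< \kappa$. Since $\kappa$ has uncountable cofinality, the countable set $C$ is bounded below $\kappa$: its supremum $\delta := \sup_i v_i$ satisfies $\delta < \kappa$, for otherwise $C$ would be a countable cofinal sequence. Then the vertex $w = \delta$ (or $\delta + 1$ if one wants strict separation — but $\delta < \kappa$ already suffices) reaches some $v_i$ only if $\delta \leq v_i$, i.e. $\delta < v_i$ or $\delta = v_i$; but $\delta \geq v_i$ for every $i$, so $\delta \geq v_i$ forces the only possibility $\delta = v_i$, giving $w = v_i \geq v_i$ trivially. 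To avoid this edge case cleanly I would instead take $w = \delta$ and note that $w \geq v_i$ requires $\delta \leq v_i$ in the ordinal order, which contradicts $v_i \leq \delta$ unless $v_i = \delta$; choosing $w = \delta+1 < \kappa$ removes the equality case entirely, since then $w > v_i$ for all $i$ and no edge $e_{w,v_i}$ exists, nor the trivial relation $w = v_i$. Thus $C$ fails to separate, and since $C$ was arbitrary, $E_\kappa^0$ lacks CSP.

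The only genuine subtlety — and the step I would watch most carefully — is the boundedness argument in the converse: the passage from ``$\kappa$ has uncountable cofinality'' to ``$\sup_i v_i < \kappa$.'' This is precisely the definition of cofinality (no countable subset is cofinal), but one must handle the degenerate possibilities that $C$ is finite or that some $v_i$ equals $\delta$, which is why replacing $w = \delta$ by $w = \delta + 1$ is the safe choice; note $\delta + 1 < \kappa$ holds because $\delta < \kappa$ and $\kappa$, having uncountable (hence limit) cofinality, is a limit ordinal. Everything else is a direct transcription of the ordinal order into the reachability relation, so I expect the proposition to be, as the authors say, ``clear.''
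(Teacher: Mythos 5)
Your proof is correct, and it is precisely the routine verification that the paper leaves implicit (the paper offers no proof, declaring the result ``clear'' from the definition of countable cofinality): you translate graph reachability in $E_\kappa$ into the ordinal order ($v \geq w$ in the graph iff $v \leq w$ as ordinals), so that a countable separating set is exactly a countable cofinal subset of $\kappa$, with $\sup_i v_i < \kappa$ and the vertex $\delta + 1$ defeating any candidate countable set when the cofinality is uncountable. Your explicit handling of the successor and zero cases (under the standard convention that successor ordinals have countable cofinality, which the paper's phrasing ``limit of a countable sequence'' glosses over) and your choice of $w = \delta + 1$ rather than $w = \delta$ to avoid the case $\delta \in C$ are exactly the right points of care.
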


The Countable Separation Property of $E^0$ will turn out to be a necessary ingredient in the primitivity of $L_K(E)$ for arbitrary graphs $E$.   Indeed one can show, using an argument similar to that employed in the proof of Theorem \ref{primitiverowfinite}, that CSP together with Conditions (MT3) and (L) are sufficient for primitivity.   However, we choose to give an alternate, somewhat more streamlined proof here, utilizing the following result of Fisher and Snider.

 \begin{thm}\label{FisherSniderTheorem}   \cite[Theorem 1.1]{FS} \  Suppose $R$ is a prime ring in which each nonzero right ideal contains a nonzero idempotent.  Suppose also that $R$ contains a countable set $\mathcal{I}$ of nonzero ideals, with the property that each nonzero ideal of $R$ contains an element of $\mathcal{I}$.  Then $R$ is both left and right primitive.
\end{thm}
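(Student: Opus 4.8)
The plan is to construct a proper left ideal $M$ of $R$ which is comaximal with every nonzero two-sided ideal, and then invoke the primitivity criterion of Lemma~\ref{primitivitylemma} to deduce left primitivity; right primitivity will then follow from a left--right symmetry argument. Since Lemma~\ref{primitivitylemma} is phrased for unital rings, I would first reduce to the unital case by passing to a prime unital overring containing $R$ as a two-sided ideal and with equivalent primitivity status (the ring-theoretic analogue of Lemma~\ref{LRSLemmas}); below I simply assume $R$ is unital. The observation that drives everything is that, because every nonzero ideal of $R$ contains some member of the countable family $\mathcal{I}=\{I_1,I_2,\ldots\}$, it suffices to build a proper left ideal $M$ with $M+I_n=R$ for every $n$.

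The heart of the construction, mirroring the idempotents $\lambda_i\lambda_i^{\ast}$ used in the proof of Theorem~\ref{primitiverowfinite}, is a descending chain of nonzero idempotents $f_1,f_2,\ldots$ satisfying $f_n\in I_n$ and $f_{n-1}f_n=f_n$ for all $n$. I would build these inductively. For $f_1$, apply the idempotent hypothesis to the nonzero right ideal $I_1$. Given $f_{n-1}$, consider $f_{n-1}R\cap I_n$: since $R$ is prime and $f_{n-1}\neq 0$, $I_n\neq 0$, the set $f_{n-1}RI_n$ is nonzero, and it lies in $f_{n-1}R\cap I_n$ (it is contained in the right ideal $f_{n-1}R$ and, as $I_n$ is two-sided, in $I_n$). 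Thus $f_{n-1}R\cap I_n$ is a nonzero right ideal, so by hypothesis it contains a nonzero idempotent, which I take to be $f_n$. Then $f_n\in f_{n-1}R$ forces $f_{n-1}f_n=f_n$, and $f_n\in I_n$. Iterating, $f_N\in f_iR$ for all $i\le N$, so $f_if_N=f_N$ whenever $i\le N$. I then set $M=\sum_{n\ge 1}R(1-f_n)$.

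Next I would verify the two required properties of $M$. For properness, suppose $1\in M$, say $1=\sum_{i\in F}r_i(1-f_i)$ with $F$ finite and $N=\max F$; multiplying on the right by $f_N$ and using $f_if_N=f_N$ for every $i\le N$ annihilates each summand, leaving $f_N=0$, which is impossible since $f_N$ is a nonzero idempotent. For comaximality, let $I$ be any nonzero ideal; it contains some $I_n$, hence contains $f_n$, so $1=(1-f_n)+f_n\in M+I$ and $M+I=R$. By Lemma~\ref{primitivitylemma}, $R$ is left primitive.

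Finally, for right primitivity the plan is to run the left--right dual of this argument, which requires nonzero idempotents inside nonzero \emph{left} ideals. I expect the main obstacle to be exactly this point, since the hypothesis only supplies idempotents in right ideals. The resolution is to prove that in a prime ring the idempotent hypothesis is left--right symmetric: given a nonzero left ideal $L$ and $0\neq a\in L$, the right ideal $aR$ is nonzero and so contains a nonzero idempotent $e=ab$; a short computation shows that $g=(ba)^2$ is again idempotent, lies in $Ra\subseteq L$, and is nonzero (otherwise $aba=0$, whence $e^2=abab=0$ and $e=0$). With the idempotent hypothesis now available on both sides, the dual construction produces a proper right ideal comaximal with every nonzero ideal, and the dual of Lemma~\ref{primitivitylemma} gives right primitivity. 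The two genuinely delicate points are this symmetry lemma and the need to arrange the idempotent chain so that it simultaneously descends ($f_{n-1}f_n=f_n$) and meets each $I_n$ --- both handled by intersecting with $f_{n-1}R$ and invoking primeness.
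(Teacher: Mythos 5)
Your proposal is correct, and it is worth noting at the outset that the paper itself does not prove this statement: Theorem \ref{FisherSniderTheorem} is quoted from \cite[Theorem 1.1]{FS}, and the paper only remarks afterwards that the proof amounts to producing a chain of idempotents of the type described in Proposition \ref{generalprimitivityprop} and feeding it to Lemma \ref{primitivitylemma}. Your argument fleshes out exactly that sketch, with one genuine organizational difference. Your inductive choice $f_n \in f_{n-1}R \cap I_n$ (nonzero by primeness, since $f_{n-1}Rb \subseteq f_{n-1}R\cap I_n$ for $0\neq b\in I_n$) yields only the one-sided relations $f_if_j=f_j$ for $i\le j$; that suffices for the left ideal $M=\sum_n R(1-f_n)$, but it does not match the hypotheses of Proposition \ref{generalprimitivityprop}, which posits the two-sided relations $e_ie_j=e_je_i=e_j$ --- and it is precisely the two-sided relations that let a single chain serve both left and right primitivity at once. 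You compensate with your symmetry lemma (if $e=ab$ is a nonzero idempotent then $g=(ba)^2$ is a nonzero idempotent in $Ra\subseteq L$; your justification is valid, since $ag=ababa=aba$, so $g=0$ forces $aba=0$ and hence $e=e^2=0$) plus a second, dual chain. Alternatively, you could have upgraded your chain directly: take a nonzero idempotent $u$ in the nonzero right ideal $f_{n-1}I_n$, so that $f_{n-1}u=u$, and set $f_n:=uf_{n-1}$; then $f_n$ is a nonzero idempotent in $I_n$ satisfying $f_{n-1}f_n=f_n=f_nf_{n-1}$, which matches the paper's sketch and makes the right-sided half a mirror image requiring no extra lemma. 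The one point you should make explicit is the unital reduction: either verify that the hypotheses pass to the prime unital overring $R_1$ (for a nonzero right ideal $N$ of $R_1$, primeness gives $NR\neq 0$, a nonzero right ideal of $R$ contained in $N$, so $N$ contains a nonzero idempotent; and any nonzero two-sided ideal $J$ of $R_1$ has $J\cap R\neq 0$, hence contains some $I_n$), or, more simply, build the chain $\{f_n\}$ inside $R$ itself and take $M=\sum_{n}R_1(1-f_n)$, exactly as the paper does in the first half of the proof of Theorem \ref{primitiverowfinite}.
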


It was established in \cite[Corollary 3.3]{ABGM} that if the graph $E$ satisfies Condition (L), then every nonzero two-sided ideal of $L_K(E)$ contains a nonzero idempotent, specifically, a  vertex.  The following is a generalization of this result to one-sided ideals.  We thank Enrique Pardo for the proof.

\begin{lem}\label{onesidedidealcontainsvertex}
Let $E$ be a graph which satisfies Condition (L), and let $K$ be any field.  Let $N$ be a nonzero right (or left) ideal of $L_K(E)$.  Then there exists a nonzero idempotent $e\in N$.
\end{lem}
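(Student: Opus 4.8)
The plan is to begin by invoking Proposition~\ref{L(E)isoL(E)op} to dispose of the left/right dichotomy. Since the map $\varphi$ appearing there is a $K$-algebra \emph{anti}-automorphism (an involution), it interchanges left ideals with right ideals and carries idempotents to idempotents; consequently $\varphi$ turns a nonzero idempotent in a right ideal into a nonzero idempotent in the corresponding left ideal. It therefore suffices to treat the case in which $N$ is a nonzero \emph{right} ideal, and I fix $0\neq x\in N$.

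The key technical input I will use is the standard Reduction Theorem for Leavitt path algebras: for every nonzero $y\in L_K(E)$ there exist paths $\mu,\nu$ for which $\mu^*y\nu$ is either a nonzero scalar multiple of a vertex, or (under the identification $v L_K(E) v\cong K[x,x^{-1}]$) a nonzero Laurent polynomial in a cycle $c$ based at $v$ that has no exit. Here is where Condition~(L) enters: since every cycle of $E$ has an exit, no exit-free cycle exists, so the second alternative never occurs. Applying this to $y=x$, I obtain paths $\mu,\nu$ and a scalar $0\neq k\in K$ with
$$\mu^* x \nu = k v, \qquad v := r(\mu) = r(\nu).$$

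The genuine obstacle is that $N$ is only a right ideal, whereas the reduction places the ghost path $\mu^*$ on the \emph{left} of $x$, which would take me out of $N$. The resolution is to reflect $\mu^*$ to the right of $x$: I set $e := k^{-1}\,x\nu\mu^*$, which lies in $N$ because $x\in N$ and $N$ is a right ideal. Using $r(\nu)=v$, so that $\nu v=\nu$, I compute
$$e^2 = k^{-2}\,x\nu(\mu^* x\nu)\mu^* = k^{-2}\,x\nu(kv)\mu^* = k^{-1}\,x(\nu v)\mu^* = k^{-1}\,x\nu\mu^* = e,$$
so $e$ is idempotent. It remains to see $e\neq 0$: if $x\nu\mu^*=0$, then right-multiplying by $\mu$ and using the relation $\mu^*\mu = r(\mu)=v$ (a repeated application of (CK1)) together with $\nu v=\nu$ gives $x\nu = 0$, whence $\mu^* x\nu = 0$, contradicting $\mu^* x\nu = kv\neq 0$. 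Thus $e$ is the desired nonzero idempotent of $N$, and the left-ideal case follows by transporting $e$ through $\varphi$.

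I expect the Reduction Theorem to be the main workhorse of the argument; once it is in hand, the only essentially new idea is the reflection $x\nu\mu^*$ that keeps the entire computation inside the right ideal $N$, and the verifications that this element is idempotent and nonzero are short manipulations with the defining relations (V), (E1), (E2), and (CK1).
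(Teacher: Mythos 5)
Your proposal is correct and takes essentially the same route as the paper: the paper invokes \cite[Proposition 3.1]{ABGM} (a form of the Reduction Theorem you use, with Condition (L) playing the same role) to obtain $xay=v$ for a nonzero $a\in N$, and then performs exactly your reflection trick, setting $e:=ayx$ --- your $e=k^{-1}x\nu\mu^*$ is this with $x=\mu^*$, $y=\nu$ up to a scalar --- with the same style of nonvanishing check. The reduction of the left-ideal case to the right-ideal case via the involution of Proposition \ref{L(E)isoL(E)op} is also precisely how the paper concludes.
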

\begin{proof}
Let $a$ be a nonzero element of the right ideal $N$ of $L_K(E)$. By \cite[Proposition 3.1]{ABGM} there exists $x,y\in L_K(E)$ and $v\in E^0$ for which  $xay=v$.  By multiplying both sides by $v$ on the left and right, we may assume  $x=vx$ and $y=yv$. Now define $e:=ayx$; since $a\in N$ then $e\in N$.  In addition, $e\neq 0$ since $0 = e = ayx$ would yield $0 = xayx = vx =x$, so that $v=xay = 0$, a contradiction.   Finally,
$$ e^2=(ayx)(ayx)=ay(xay)x =ayvx=ayx=e,$$
which establishes the result for right ideals.
That the result is true for left ideals as well follows immediately from Proposition \ref{L(E)isoL(E)op}.
\end{proof}

We are now in position to prove the sufficiency result.

\begin{prop}\label{threeconditionsimplyprimitiveprop}  Let $E$ be an arbitrary graph, and $K$ any field.   Suppose:
\begin{enumerate}[(i)]
  \item  $E$ satisfies Condition (MT3),
\item  $E$ satisfies Condition (L), and
\item  $E^0$ has the Countable Separation Property.
\end{enumerate}
Then $L_K(E)$ is primitive.
\end{prop}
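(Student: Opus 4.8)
The plan is to verify the three hypotheses of the Fisher--Snider theorem (Theorem \ref{FisherSniderTheorem}) for the ring $R = L_K(E)$, since that theorem then delivers primitivity directly. The three things to check are: (a) $R$ is prime; (b) every nonzero right ideal of $R$ contains a nonzero idempotent; and (c) $R$ contains a countable family $\mathcal{I}$ of nonzero ideals such that every nonzero ideal of $R$ contains a member of $\mathcal{I}$. Conditions (a) and (b) are essentially free from the machinery already assembled: primeness of $R$ follows from Condition (MT3) via Theorem \ref{primenesstheorem}, and the idempotent-in-every-one-sided-ideal property follows from Condition (L) via Lemma \ref{onesidedidealcontainsvertex}. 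So the entire substance of the proof lies in condition (c), which is exactly where the Countable Separation Property must be used.

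For condition (c), I would exploit the fact (cited in the excerpt from \cite[Corollary 3.3]{ABGM}) that under Condition (L) every nonzero ideal of $L_K(E)$ contains a vertex. Let $C(E^0) = \{v_1, v_2, \ldots\}$ be the countable separating set guaranteed by CSP, and set $\mathcal{I} = \{\, L_K(E)\, v_i\, L_K(E) \mid i \in \mathbb{N}\,\}$, the countable family of ideals generated by these vertices (each is nonzero). Given any nonzero ideal $I$ of $R$, I would first produce a vertex $w \in I$, then invoke the separating property of $C(E^0)$ to find some $v_i$ with $w \geq v_i$. The key ring-theoretic observation, recorded in the preliminary discussion of the excerpt, is that if $w \in I$ and $w \geq v_i$ via a path $p$, then $v_i = p^* p = p^* w p \in I$; hence the whole ideal $L_K(E) v_i L_K(E)$ generated by $v_i$ lies inside $I$. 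Thus $I$ contains the member $L_K(E) v_i L_K(E)$ of $\mathcal{I}$, which is precisely what condition (c) demands.

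Having verified all three hypotheses, the conclusion is immediate: Theorem \ref{FisherSniderTheorem} asserts that such an $R$ is both left and right primitive, and in particular primitive, completing the proof. The main obstacle, such as it is, is purely bookkeeping around condition (c): one must be careful that the separating vertices $v_i$ really do generate nonzero ideals lying inside any given $I$, and that the implication ``$w \in I$ and $w \geq v_i$'' genuinely forces $v_i \in I$ (which relies on the $p^* w p = v_i$ computation, valid because $p$ is a path from $w$ to $v_i$ and $w$ is an idempotent fixing the source of $p$). No single step is technically deep; the proof is really an assembly of Theorem \ref{primenesstheorem}, Lemma \ref{onesidedidealcontainsvertex}, the vertex-in-every-ideal result, and the CSP, all funneled into the Fisher--Snider criterion.
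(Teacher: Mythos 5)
Your proposal is correct and follows essentially the same route as the paper's own proof: both verify the three hypotheses of the Fisher--Snider theorem by deriving primeness from Condition (MT3) via Theorem \ref{primenesstheorem}, idempotents in nonzero one-sided ideals from Condition (L) via Lemma \ref{onesidedidealcontainsvertex}, and the countable family $\mathcal{I} = \{L_K(E)v_iL_K(E)\}$ from the CSP together with the vertex-in-every-ideal result of \cite[Corollary 3.3]{ABGM} and the $v_i = p^{*}wp$ computation. No differences of substance.
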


\begin{proof}
Since $E$ satisfies Condition (MT3), Theorem \ref{primenesstheorem} yields that $L_K(E)$ is prime.  By Condition (L) and Lemma \ref{onesidedidealcontainsvertex}, every nonzero right ideal of $L_K(E)$ contains a nonzero idempotent.

Now let $T = \{v_i \ | \ i\in \mathbb{N}\}$ be a set of vertices with respect to which $E^0$ has the Countable Separation Property, and let $B$ be any nonzero (two-sided) ideal of $L_K(E)$.  Again using Condition (L),  \cite[Corollary 3.3]{ABGM}  yields that there exists $v\in E^0 \cap B$.  By hypothesis, there exists $v_i \in T$ for which $v\geq v_i$.  But then $v_i \in B$, so that $L_K(E)v_iL_K(E) \subseteq B$.   Thus the set
$$\mathcal{I} \ = \ \{L_K(E)v_iL_K(E) \ | \ i\in \mathbb{N}\}$$
is a countable set of nonzero ideals of $L_K(E)$ having the property that each nonzero two-sided ideal of $L_K(E)$ contains an ideal in this set.

So we have established that the hypotheses of Theorem \ref{FisherSniderTheorem} are satisfied in $L_K(E)$, which yields the result.
\end{proof}

As mentioned in Section \ref{rowfinitesection}, Theorem \ref{primitiverowfinite} may also be established as a consequence of  Theorem \ref{threeconditionsimplyprimitiveprop}.  Specifically, by Lemma \ref{rowfiniteMT3givesCSPlemma}, CSP is automatically satisfied in the row-finite case, so that by Theorem \ref{threeconditionsimplyprimitiveprop}, Conditions (MT3) and (L) are sufficient in that case to imply the primitivity of $L_K(E)$.  That Conditions (MT3) and (L) are necessary was established in the proof of Theorem \ref{primitiverowfinite}.

Similarly, using Remark \ref{CSPforcountablegraphremark}, we get as well  the following consequence of Proposition \ref{threeconditionsimplyprimitiveprop}.

\begin{cor}
Let $E$ be a graph for which $E^0$ is at most countable, and let  $K$ be any field.  Then $R=L_{K}(E)$ is  primitive
 if and only if

(i) $E$ satisfies Condition (MT3), and

(ii) $E$ satisfies Condition (L).

\end{cor}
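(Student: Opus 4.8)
The plan is to derive this corollary as a direct specialization of Proposition~\ref{threeconditionsimplyprimitiveprop} together with the necessity arguments already contained in the proof of Theorem~\ref{primitiverowfinite}. The key observation that trivializes one direction is Remark~\ref{CSPforcountablegraphremark}: when $E^0$ is at most countable, the Countable Separation Property holds automatically by taking $C(E^0) = E^0$ (every vertex $w$ satisfies $w \geq w$). Thus condition (iii) of Proposition~\ref{threeconditionsimplyprimitiveprop} is free in this setting, and the remaining two conditions (MT3) and (L) are precisely conditions (i) and (ii) here.

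For the sufficiency direction, I would argue as follows. Suppose $E$ satisfies Conditions (MT3) and (L). Since $E^0$ is at most countable, Remark~\ref{CSPforcountablegraphremark} tells us $E^0$ has CSP. Hence all three hypotheses of Proposition~\ref{threeconditionsimplyprimitiveprop} are met, and that proposition immediately yields the primitivity of $L_K(E)$.

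For the necessity direction, I would reuse verbatim the reasoning from the converse half of the proof of Theorem~\ref{primitiverowfinite}, since that argument never actually invoked row-finiteness. Concretely: if $R = L_K(E)$ is primitive, then $R$ is in particular prime, so Theorem~\ref{primenesstheorem} forces Condition (MT3). To obtain Condition (L), argue by contradiction: if some cycle $c$ based at $v$ had no exit, then by \cite[Lemma 1.5]{ABGM} the corner $vRv \cong K[x,x^{-1}]$, which is a commutative non-primitive ring (a commutative primitive ring is a field, and $K[x,x^{-1}]$ is not); but a corner of a primitive ring is primitive, contradiction. Hence Condition (L) holds.

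I expect essentially no obstacle here, since the result is a genuine corollary: the only content is recognizing that countability of $E^0$ supplies CSP for free (Remark~\ref{CSPforcountablegraphremark}) and that the necessity argument of Theorem~\ref{primitiverowfinite} used row-finiteness nowhere. The mildest point of care is simply confirming that the cited necessity argument is graph-cardinality-agnostic, which it is, as it rests only on the corner-ring computation \cite[Lemma 1.5]{ABGM} and the primeness criterion of Theorem~\ref{primenesstheorem}, both stated for arbitrary graphs.
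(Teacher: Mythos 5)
Your proposal is correct and follows exactly the paper's route: sufficiency via Remark~\ref{CSPforcountablegraphremark} (countable $E^0$ gives CSP for free) combined with Proposition~\ref{threeconditionsimplyprimitiveprop}, and necessity by reusing the converse half of the proof of Theorem~\ref{primitiverowfinite}, which---as the paper itself notes in the proof of Theorem~\ref{primitivitytheorem}---nowhere uses row-finiteness. Your added check that the corner-ring argument via \cite[Lemma 1.5]{ABGM} and Theorem~\ref{primenesstheorem} is stated for arbitrary graphs is exactly the right point of care and matches the paper's reasoning.
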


\bigskip

We conclude this section by offering a few observations regarding the ``prime implies primitive" question.

\begin{prop}\label{generalprimitivityprop}
Let $R$ be a unital ring.  Suppose $R$ contains a countable set of nonzero idempotents $\{e_i \ | \ i\in \mathbb{N}\}$ for which $e_ie_j = e_je_i = e_j$ for all $j\geq i$, and for which each nonzero ideal of $R$ contains some $e_i$.    Then $R$ is primitive.
\end{prop}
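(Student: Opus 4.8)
The plan is to apply the Formanek--Lam criterion (Lemma \ref{primitivitylemma}), exactly as in the sufficiency half of Theorem \ref{primitiverowfinite}, with the abstract idempotents $e_i$ now playing the role that the products $\lambda_i\lambda_i^{\ast}$ played there. Concretely, I would define the left ideal
$$M \ = \ \sum_{i=1}^{\infty} R(1-e_i),$$
and then verify the two hypotheses of Lemma \ref{primitivitylemma}: that $M$ is a proper left ideal of $R$, and that $M+I=R$ for every nonzero two-sided ideal $I$ of $R$. Since $R$ is unital, this will yield that $R$ is left primitive; and because both hypotheses are left--right symmetric (the relation $e_ie_j=e_je_i=e_j$ is symmetric, and the ideal condition is two-sided), the mirror-image argument will give right primitivity as well.

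For properness I would argue by contradiction. If $1\in M$, then $1=\sum_{i=1}^{n} r_i(1-e_i)$ for some $n\in\mathbb{N}$ and $r_1,\dots,r_n\in R$. The decisive point is the nesting relation: for each $i\le n$ we have $e_ie_n=e_n$ (this is the hypothesis $e_ie_j=e_j$ with $j=n\ge i$), whence $(1-e_i)e_n=e_n-e_ie_n=0$. Multiplying the displayed equation on the right by $e_n$ therefore collapses the entire right-hand side to $0$, forcing $e_n=0$ and contradicting the assumption that each $e_i$ is nonzero. Hence $M\neq R$. For comaximality, let $I$ be any nonzero two-sided ideal of $R$; by hypothesis $I$ contains some $e_i$, and then $1=(1-e_i)+e_i\in M+I$ gives $M+I=R$ at once. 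Thus $M$ meets the conditions of Lemma \ref{primitivitylemma}, and $R$ is primitive.

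The argument is essentially a distillation of the row-finite proof, so I do not anticipate a serious obstacle. The only step that genuinely exploits the hypotheses beyond their surface is the properness of $M$, which depends crucially on the chain condition $e_ie_j=e_j$ for $j\ge i$ in order to guarantee $(1-e_i)e_n=0$; without this nesting the right-multiplication trick breaks down and $M$ need not be proper. The comaximality, by contrast, uses only the weaker assumption that every nonzero ideal meets the family $\{e_i\}$, and so is immediate.
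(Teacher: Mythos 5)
Your proposal is correct and is essentially the paper's own proof: the paper defines the same left ideal $M=\sum_{i=1}^{\infty}R(1-e_i)$ and verifies the two hypotheses of Lemma \ref{primitivitylemma} exactly as you do, with your right-multiplication by $e_n$ (using $(1-e_i)e_n=0$ for $i\le n$) being the same mechanism as the identity $\lambda_i\lambda_i^{\ast}\lambda_t\lambda_t^{\ast}=\lambda_t\lambda_t^{\ast}$ in the row-finite argument the paper cites. No gaps.
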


 \begin{proof}
 The argument  mimics exactly  the first half of the proof of Theorem \ref{primitiverowfinite}:  one defines the left $R$-ideal $M = \sum_{i=1}^\infty R(1-e_i)$, and shows that the hypotheses on the set $\{e_i \ | \ i\in \mathbb{N}\}$ yield that $M$ satisfies the properties of Lemma \ref{primitivitylemma}.
 \end{proof}

Essentially,  the proof of Theorem \ref{FisherSniderTheorem} is accomplished by showing that the given hypotheses lead to a set of idempotents of the type described in Proposition \ref{generalprimitivityprop}, and then concluding thereby that $R$ is primitive.

We may use a similar approach in the context of Leavitt path algebras.  Suppose $E^0$ has CSP along with Conditions (MT3) and (L).  Then by a construction nearly identical to the one presented in Theorem \ref{primitiverowfinite} (replacing $T(v)$ by the supposed countable set $C(E^0)$), we may build a countable set of nonzero idempotents $\{ \lambda_i \lambda_i^\ast \ | \ i\in \mathbb{N} \}$ in $L_K(E)$ which satisfies  the two conditions of Proposition \ref{generalprimitivityprop}, and thus the primitivity of $L_K(E)$ follows.   (This approach does utilize the fact that by Condition (L) every nonzero two-sided ideal of $L_K(E)$ contains a vertex, but does not utilize Lemma \ref{onesidedidealcontainsvertex}.)

\smallskip


\section{Primitive Leavitt path algebras for arbitrary graphs: \\ The necessity of the Countable Separation Property of $E^0$ }

We now establish the converse of Proposition \ref{threeconditionsimplyprimitiveprop}.
 The heavy lifting needed to achieve this goal will come in showing that if $E$ does not have CSP, then $L_K(E)$ is not primitive.

\begin{rem}
\label{CSPresultsRemark}
{\rm  Along the way we shall use (most often without explicit mention) various characteristics of the Countable Separation Property, each of which is easy to verify.  We collect those up in the following list.
\begin{enumerate}

 \item  The empty set has CSP.

 \item  If $X\subseteq Y$ and $Y$ has CSP, then $X$ has CSP.  \ (Equivalently, if $X\subseteq Y$ and $X$ does not have  CSP, then $Y$ does not  have CSP.)

 \item If $S\subseteq E^0$ does not have CSP and $S_1,S_2,\ldots \subseteq S$ are countably many subsets of $S$, each of which has CSP, then  $S \ \setminus \  \cup_{i=1}^{\infty} S_i$ does not have CSP.

     \item  If $S$ is the union of a countable number of subsets of $E^0$ each of which has CSP, then $S$ has CSP.

 \item If $S \subseteq \cup_{i=1}^n X_i$ and $S$ does not have CSP, then $X_i$ does not have CSP for some $1\leq i \leq n$.

     \item  If $S \subseteq Y\cup Z$ and $Y$ has CSP, then $X$ does not have CSP if and only if $Z$ does not have CSP.  \hfill $\Box$
\end{enumerate}
}
\end{rem}

Our approach is as follows.  Let $R$ denote $L_K(E)$, assume that $R$ is prime, and let $R_1$ denote a prime unital ring into which $R$ embeds as a two-sided ideal.       We show that, in the absence of the Countable Separation Property on $E^0$, the collection $\{R_1vR_1 \ | \ v\in E^0\}$ of two-sided ideals of $R_1$ is ``sparse", in the sense that if all of the ideals in this collection are comaximal with a left ideal $I$ of $R_1$, then necessarily $I=R_1$.  This conclusion, combined with Lemma \ref{primitivitylemma}, will show that $R_1$, and thereby $R$, is not primitive.  We utilize the following simple but important observation to draw such a conclusion.
\begin{lem}
Let $I$ be a left ideal of a unital ring $A$.  If there exist $x,y\in A$ such that $1+x \in I$, $1+y \in I$, and $xy=0,$ then $I=A$.
\label{lem: zeromult}
\end{lem}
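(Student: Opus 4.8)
The plan is to produce the single element $1 + xy + x + y$ and show it lies in $I$ while simultaneously equaling $1$, forcing $1 \in I$. Since $I$ is a \emph{left} ideal, I have the freedom to multiply elements of $I$ on the left by arbitrary elements of $A$ and to add elements of $I$; I must be careful not to multiply on the right, as that need not preserve membership in $I$.

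First I would observe that since $1+x \in I$ and $I$ is a left ideal, multiplying on the left by $(1+y)$ keeps us in $I$, giving $(1+y)(1+x) \in I$. Expanding, $(1+y)(1+x) = 1 + x + y + yx$. This is not quite the right product to exploit the hypothesis $xy = 0$, so instead I would multiply $1+y \in I$ on the left by $(1+x)$, yielding $(1+x)(1+y) = 1 + y + x + xy = 1 + x + y$, where the last equality uses $xy = 0$. Thus $1 + x + y \in I$.

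The final step is to combine this with the two original elements. We have $1+x \in I$ and $1+y \in I$, so their sum $2 + x + y \in I$. Subtracting $1 + x + y \in I$ gives $(2+x+y) - (1+x+y) = 1 \in I$, whence $I = A$. All of these manipulations use only closure of $I$ under left multiplication by elements of $A$ and under addition and subtraction, so each intermediate element genuinely lies in $I$.

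I do not anticipate a serious obstacle here; the lemma is a short formal computation. The only point requiring care is the direction of multiplication: the hypothesis $xy = 0$ (rather than $yx = 0$) is precisely what makes $(1+x)(1+y)$ collapse to $1+x+y$, so I must form that product in that order, left-multiplying the element $1+y$ of $I$ by $1+x$. With the product taken in the correct order, the conclusion follows immediately from the linear algebra of the left ideal.
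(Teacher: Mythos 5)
Your proof is correct and takes essentially the same route as the paper: the paper left-multiplies $1+y\in I$ by $x$ to get $x(1+y)=x+xy=x\in I$ and then writes $1=(1+x)-x$, while you left-multiply by $1+x$ and finish with the subtraction $(1+x)+(1+y)-(1+x)(1+y)=1$, which is the same computation rearranged. In both cases the key step is exploiting $xy=0$ via left multiplication of $1+y$, so there is no gap.
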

\begin{proof}
Since $1+y\in I$ then $x(1+y)=x+xy=x \in I$, so that $1 = (1+x)-x \in I$.
\end{proof}

\begin{lem} \label{lem: fund}
Let $E$ be a directed graph and let $T$ be a subset of $E^0$  which does not have the Countable Separation Property.  Suppose that we can associate with each $v\in T$  an element
$x_v\in L_K(E)$
in such a way that, for each $v\in T$, the set
$$Z(T,v) \ = \ \{w\in T~|~x_vx_w=0\}$$
 has CSP.  Then the set
$$ T^\sim \  = \ \{v\in T \ | \ \{w\in T \ | \  x_vx_w\neq 0\} \ \mbox{does not have CSP}\}$$ does not have CSP.
\end{lem}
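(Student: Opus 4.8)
The plan is to prove the contrapositive of membership in $T^\sim$ and thereby show that in fact $T^\sim = T$, after which the conclusion is immediate since $T$ is assumed not to have CSP. The engine of the argument is a single partition: for each fixed $v \in T$ and each $w \in T$, exactly one of $x_vx_w = 0$ or $x_vx_w \neq 0$ holds, so writing $N(v) := \{w \in T \mid x_vx_w \neq 0\}$ for the ``nonzero locus'' we obtain a decomposition $T = Z(T,v) \cup N(v)$. Note that $N(v)$ is precisely the set appearing in the definition of $T^\sim$, so that $v \in T^\sim$ is by definition the statement that $N(v)$ does not have CSP.

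First I would take an arbitrary $v \in T$ and suppose, toward showing $v \in T^\sim$, that instead $v \notin T^\sim$. Unwinding the definition, this says exactly that $N(v)$ \emph{does} have CSP. The standing hypothesis of the lemma supplies that $Z(T,v)$ also has CSP. Since $T = Z(T,v) \cup N(v)$ is then a union of two (hence countably many) sets each having CSP, item (4) of Remark \ref{CSPresultsRemark} forces $T$ itself to have CSP. This contradicts the hypothesis that $T$ does not have CSP. Consequently no $v \in T$ can satisfy $v \notin T^\sim$, which is to say $T^\sim = T$; since $T$ does not have CSP, neither does $T^\sim$, as desired.

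I do not expect a substantive obstacle here: there is no analytic or combinatorial content beyond recognizing the two-set partition of $T$ and invoking the closure of CSP under countable unions. The only genuine care required is in reading the definition of $T^\sim$ in the correct direction, namely that ``$v \notin T^\sim$'' means ``$N(v)$ has CSP'' (rather than ``$N(v)$ fails CSP''); once this is set straight, the hypothesis delivers CSP for the complementary piece $Z(T,v)$, and the union-closure property closes the argument in one step.
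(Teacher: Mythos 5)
Your proof is correct, and it actually establishes something strictly stronger than the lemma asserts. The paper's proof argues by contradiction: assuming $T^\sim$ has CSP, it passes to $T' = T \setminus T^\sim$ (which then fails CSP), fixes $v \in T'$, and partitions $T'$ into its zero and nonzero loci with respect to $x_v$, deriving a contradiction with the hypothesis on $Z(T,v)$ via the subset-hereditary property of CSP. You instead apply the same zero/nonzero partition directly to $T$ for an arbitrary $v \in T$: since $Z(T,v)$ has CSP by hypothesis, if $N(v)$ also had CSP then $T = Z(T,v) \cup N(v)$ would have CSP by the union-closure property of Remark \ref{CSPresultsRemark}(4), contradicting the standing assumption on $T$. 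This shows every $v \in T$ lies in $T^\sim$, i.e., $T^\sim = T$ --- a sharper conclusion than the paper's (which only rules out $T^\sim$ having CSP, without locating $T \setminus T^\sim$), obtained without the contradiction scaffolding around $T^\sim$ and without invoking the hereditary property of CSP for subsets. What the paper's formulation buys is only the exact statement needed downstream in Proposition \ref{lackofCSPProp}; your version would serve there equally well, and arguably more transparently, since the nonemptiness of $T^\sim$ used later is immediate from $T^\sim = T \neq \emptyset$.
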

\begin{proof}  Suppose to the contrary that  $T^\sim$ has CSP.    Then $T':=T\setminus T^\sim$ does not have CSP, and so in particular is nonempty.
 Now fix  $v\in T'$.  Then clearly
 $$T' \ = \ \{w\in T'~|~x_vx_w\neq 0\} \ \cup \ \{w\in T'~|~x_vx_w=0\}.$$
 As $v \notin T^\sim$, the first  displayed set has CSP.  So, since $T'$ does not have CSP, the second displayed set does not have CSP.  But this contradicts the hypothesis that $Z(T,v)$ has CSP for all $v\in T$.
\end{proof}

The following result will yield the fundamental relationship between the Countable Separation Property in $E^0$ on the one hand,  and the form of various sets of elements of $L_K(E)$ on the other.

\begin{prop} \label{lackofCSPProp}
 Let $E$ be an arbitrary graph, and $K$ any field.  Let $S$ be a subset of $E^0$ which does not have CSP.  Let $d$ be a fixed positive integer, and let  $(m_1,\ldots ,m_d)$ and $(n_1,\ldots ,n_d)$ be a fixed pair of  sequences of nonnegative integers of length $d$.  Let $[d]$ denote the set $\{1,2,...,d\}$.    Suppose we can associate with each element $v\in S$ an element $x_v$ of $L_K(E)$ of the form
$$x_v=\sum_{i=1}^d k_{i,v} a_{i,v} b_{i,v}^*$$
for which, for all $v\in S$ and $i \in [d]$:
\begin{enumerate}[(i)]
\item  $k_{i,v} \in K$,
\item  the length $\ell(a_{i,v})$ of $a_{i,v}$ is $m_i$,
\item  the length $\ell(b_{i,v})$ of $b_{i,v}$ is $n_i$, \ and
\item $v\geq r(a_{i,v})$  \ (so that  $v\geq r(b_{i,v})$ as well).
\end{enumerate}
Then there exists $v$ in $S$ for which the set
$$Z(S,v) \ = \ \{w\in S~|~x_vx_w=0\} $$
does not have CSP.
\end{prop}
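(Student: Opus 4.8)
The plan is to argue by contradiction: I assume that $Z(S,v)$ has CSP for every $v\in S$ and derive a contradiction, proceeding by induction on the complexity of the family. First I would record two reductions. If some $x_v=0$ then $Z(S,v)=S$, which fails CSP, and we are done; so I may assume every $x_v\neq 0$, and any term with $r(a_{i,v})\neq r(b_{i,v})$ is itself zero and may be absorbed into $k_{i,v}=0$. The second, and decisive, tool is a \emph{spreading principle} coming from hypothesis (iv): for each fixed $i$ and each vertex $u$ the set $\{v\in S\mid r(a_{i,v})=u\}$ has CSP, since the singleton $\{u\}$ witnesses it (because $v\geq r(a_{i,v})=u$); hence $\{v\mid r(a_{i,v})\in C\}$ has CSP for every countable $C$ by Remark \ref{CSPresultsRemark}(4). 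Thus on any subset of $S$ failing CSP the target vertices $r(a_{i,v})$ cannot be confined to countably many vertices. This is the only place (iv) enters.

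Feeding the assumption into Lemma \ref{lem: fund} with $T=S$ shows that $S^\sim$ fails CSP, so for a non-CSP set of $v$ the set $\{w\mid x_vx_w\neq 0\}$ fails CSP. Expanding
$$x_vx_w=\sum_{i,j=1}^d k_{i,v}k_{j,w}\,a_{i,v}\,(b_{i,v}^{*}a_{j,w})\,b_{j,w}^{*},$$
a product is nonzero only if some contraction $b_{i,v}^{*}a_{j,w}\neq 0$, i.e. only if $b_{i,v}$ and $a_{j,w}$ are prefix-comparable; and whether this means $b_{i,v}$ is an initial subpath of $a_{j,w}$ or conversely is decided purely by whether $n_i\leq m_j$, since the lengths are fixed. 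So $\{w\mid x_vx_w\neq 0\}$ lies in the finite union of the sets $C_{ij}(v)=\{w\mid b_{i,v},a_{j,w}\text{ comparable}\}$, and two applications of Remark \ref{CSPresultsRemark}(5) produce a single pair $(i_0,j_0)$ and a set $S_0\subseteq S$ failing CSP on which $C_{i_0j_0}(v)$ fails CSP for every $v\in S_0$. Using the anti-isomorphism of Proposition \ref{L(E)isoL(E)op} (which swaps the $a_{i,v}$ and $b_{i,v}$, preserving (iv) because $r(a_{i,v})=r(b_{i,v})$) I may assume $n_{i_0}\leq m_{j_0}$, so that $C_{i_0j_0}(v)$ is exactly the fiber $\{w\mid \text{the initial subpath of }a_{j_0,w}\text{ of length }n_{i_0}\text{ equals }b_{i_0,v}\}$.

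When $d=1$ this finishes the argument. The spreading principle applied to the range $r(b_{1,v})=r(a_{1,v})$ shows that $\{b_{1,v}\mid v\in S_0\}$ is uncountable, so I may choose $v_1,v_2\in S_0$ with $b_{1,v_1}\neq b_{1,v_2}$. Because for $d=1$ the product is nonzero \emph{exactly} on the fiber, $Z(S,v_1)$ contains the entire fiber attached to $v_2$, which fails CSP; this contradicts the standing assumption.

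The inductive step ($d>1$) is where I expect the real difficulty, and it reduces to controlling cancellation among the $d^2$ cross-terms together with a subtle direction-of-inclusion issue. The clean $d=1$ dictum ``if $w$ fails to match $v$ then $x_vx_w=0$'' breaks down, since other pairs $(i,j)$ may contract nontrivially and keep $x_vx_w$ alive; so exhibiting a $v$ with non-CSP zero-set requires a non-CSP set of $w$ for which \emph{all} contractions $b_{i,v}^{*}a_{j,w}$ vanish at once. Note that passing to a proper sub-sum $y_v$ of $x_v$ cannot help: then $y_vy_w$ is merely a summand of $x_vx_w$, giving $Z(S,v)\subseteq Z_{\{y\}}(S,v)$, the wrong inclusion for transferring failure of CSP back to $Z(S,v)$ — so the induction must \emph{not} lower the number of terms. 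My plan is to deploy the spreading principle again: a contraction $b_{i,v}^{*}a_{j,w}\neq0$ forces the targets $r(a_{i,v})$ and $r(a_{j,w})$ to be reachability-comparable, and the ``ancestor side'' $\{w\mid r(a_{j,w})\geq r(a_{i,v})\}$ automatically has CSP (for such $w$, $w\geq r(a_{j,w})\geq r(a_{i,v})$, so a single vertex witnesses it). The only obstruction to killing all contractions simultaneously is the ``descendant side'', where $r(a_{j,w})$ lies below some $r(a_{i,v})$ and the graph may branch uncountably. Taming this descendant side — by combining the finiteness of $d$, the fixed lengths, and a careful spreading choice of $v$, plausibly via an induction that lowers the total length $\sum_i(m_i+n_i)$ rather than $d$ — is the crux, and it is exactly the step whose bookkeeping must be carried out with care.
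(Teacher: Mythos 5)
Your opening reductions match the start of the paper's proof (contradiction hypothesis, Lemma \ref{lem: fund}, the finite decomposition over $[d]^2$ via Remark \ref{CSPresultsRemark}), and your $d=1$ argument is correct. But the proposition's content is the general case, and for $d>1$ you explicitly leave the crux unresolved (``the step whose bookkeeping must be carried out with care''), so the proof is incomplete precisely where the work lies. Moreover, the one concrete reduction you offer for general $d$ --- invoking Proposition \ref{L(E)isoL(E)op} to assume $n_{i_0}\le m_{j_0}$ --- does not work as stated: $\varphi$ is an \emph{anti}-automorphism, so $\varphi(x_v)\varphi(x_w)=\varphi(x_w x_v)$, and applying it converts your standing hypothesis about the sets $\{w\in S\mid x_vx_w=0\}$ into a statement about the transposed sets $\{w\in S\mid x_wx_v=0\}$, which you have not assumed. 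Fortunately the case you were trying to remove needs no removal: if $a_{j_0,w}$ is an initial subpath of $b_{i_0,v}$, then $w\ge r(a_{j_0,w})\ge r(b_{i_0,v})$, so for fixed $v$ that half of the comparability set has CSP with respect to the singleton $\{r(b_{i_0,v})\}$ --- this is exactly your own ``spreading principle,'' and exactly how the paper disposes of that case (its set $X$).

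The missing idea is that the contradiction should be extracted on the $v$-side, not by hunting for a non-CSP set of $w$ annihilating all $d^2$ contractions simultaneously (your stated obstruction, which indeed looks intractable and which the paper never confronts). The paper iterates your one-pair extraction: it recursively builds nested non-CSP sets $S=S_0\supseteq S_1\supseteq\cdots$, where at stage $n$ a pair $(i_{S_n},j_{S_n})$ and a chosen $v_n$ yield $S_{n+1}=\{w\in S_n\mid x_{v_n}x_w\neq 0$ and $b_{i_{S_n},v_n}$ is an initial subpath of $a_{j_{S_n},w}\}$, recording the forced prefix $b_{n+1}=b_{i_{S_n},v_n}$, whose length $n_{i_{S_n}}$ depends only on the pair. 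Since the pairs range over the finite set $[d]^2$, pigeonhole gives $f<g$ with the same pair $(i,j)$; then $b_{f+1}$ and $b_{g+1}$ are initial subpaths of equal length $n_i$ of a common path $a_{j,w}$ (any $w\in S_{g+1}$, which is nonempty), hence $b_{f+1}=b_{g+1}=:b$, and the same equal-length-prefix argument shows that \emph{every} $v$ in the non-CSP set $S_g'$ has $b_{i,v}=b$. Hypothesis (iv) then gives $v\ge r(b)$ for all such $v$, so $S_g'$ has CSP via $\{r(b)\}$ --- the desired contradiction. Note that only the safe implication you already use ($x_vx_w\neq 0$ forces some $b_{i,v}^*a_{j,w}\neq 0$) is ever needed, so cancellation among the cross terms is irrelevant, and no induction on $d$ or on $\sum_i(m_i+n_i)$ is required; your $d=1$ case is essentially the one-step instance of this stabilization, and what you lack is the iterate-and-pigeonhole step that forces the prefixes $b_{i,v}$ to be constant on a non-CSP set of $v$'s.
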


\begin{proof} Suppose not.  Then for each $v\in S$, we have that the set
$Z(S,v) = \{w\in S | x_vx_w=0\}$ has CSP.   Indeed, then,  this same property passes to any subset $T\subseteq S$; that is, if $T\subseteq S$, then we have that for each $v\in T$ that the set
$$Z(T,v) = \{w\in T  \ | \  x_vx_w=0\}$$
 has CSP. If in addition the subset $T$ of $S$ does not have CSP, then Lemma \ref{lem: fund} applies to $T$, so that the set
$$T^\sim \  = \ \{v\in T~|~\{w\in T~|~x_vx_w\neq 0\}~{\rm does~not~have~CSP}\}$$
does not have CSP.
\medskip

Fix $v\in T^\sim$.   Now $x_vx_w\neq 0$ implies that there exists at least one pair $(i,j)\in [d]^2$ for which $b_{i,v}^\ast a_{j,w} \neq 0$, so that
 $$\{w\in T \ | \ x_vx_w\neq 0\} = \bigcup_{(i,j)\in [d]^2} \{w\in T \ | \ x_vx_w\neq 0 \ \mbox{and} \ b_{i,v}^*a_{j,w} \neq 0\}.$$
Thus $$ T^\sim  \subseteq \bigcup_{(i,j)\in [d]^2} \{v\in T \ | \ \{w\in T \ | \ x_vx_w\neq 0 \ \mbox{and} \ b_{i,v}^*a_{j,w} \neq 0\} \ \mbox{does not have CSP} \},$$
  a union of finite sets.  Then by Remark \ref{CSPresultsRemark} we have that there exists $(i,j)\in [d]^2$ for which
$$\{v\in T \ | \ \{w\in T \ | \ x_vx_w\neq 0 \ \mbox{and} \ b_{i,v}^*a_{j,w} \neq 0\}  \ \mbox{does not have CSP} \}$$
does not have CSP.
To summarize:   assuming that the set $T^\sim$ does not have CSP, we are led to  an element of $[d]^2$, which we denote by $(i_T,j_T)$,  for which
$$\{v\in T \ | \ \{w\in T \ | \ x_vx_w\neq 0 \ \mbox{and} \ b_{i_T,v}^*a_{j_T,w} \neq 0\}  \ \mbox{does not have CSP} \} $$
does not have CSP.
\medskip

Now let $\mathcal{S}$ denote the collection of subsets $T$ of the original set $S$
for which $T$ does not have CSP.  Then the above discussion yields that there exists a (not necessarily unique) map
$$\pi : \mathcal{S} \to  [d]^2,$$ where
$\pi(T)=(i_T,j_T)$ in case $$\{v\in T \ |   \ \{w\in T \ | \  x_vx_w\neq 0 \ \mbox{and} \   ~b_{i_T,v}^*a_{j_T,w}\neq 0\}~{\rm does~not~have~CSP}\} \ \ \mbox{does not have CSP}.$$

\medskip

We now recursively define a countable collection of nested subsets $$S=S_0\supseteq S_1\supseteq S_2 \supseteq \cdots,$$ none of which have CSP.  We take $S_0=S$.  Then  $S_0$ does not have CSP by hypothesis.  Having defined $S_n$ for $n\ge 0$, we note that there exist $i_{S_n}$ and $j_{S_n}$ in $\{1,\ldots ,d\}$ such that
$\pi(S_n)=(i_{S_n},j_{S_n})$.
That is, the set
$$ (S_n)^\sim_{i_{S_n},j_{S_n}} \ = \ \{v\in S_n~|~\{w\in S_n~| \  x_vx_w \neq 0 \ \mbox{and} \ ~b_{i_{S_n},v}^*a_{j_{S_n},w}\neq 0\}~{\rm does~not~have~CSP}\} $$
does not have CSP. In this case, we pick any $v\in (S_n)^\sim_{i_{S_n},j_{S_n}}$; that is, pick any $v\in S_n$ for which
$$\{w\in S_n~| \  x_vx_w \neq 0, \ \mbox{and} \ ~b_{i_{S_n},v}^*a_{j_{S_n},w}\neq 0\} \ \mbox{does not have CSP}.$$   (Such $v \in S_n$ exists since the set $(S_n)^\sim_{i_{S_n},j_{S_n}}$ of such $v$ does not have CSP, and thus is nonempty.)
If $b_{i_{S_n},v}^*a_{j_{S_n},w}\neq 0$ then either $b_{i_{S_n},v}$ is an initial subpath of $a_{j_{S_n},w}$, or $a_{j_{S_n},w}$ is an initial subpath of $b_{i_{S_n},v}$.
 Now define the set
 $$X  =\{w\in S_n~| \  x_vx_w\neq 0 \ \mbox{and} \ ~a_{j_{S_n},w}~{\rm is~an~initial~subpath~of ~}b_{i_{S_n},v}\}.$$
(Note that  $X$ depends on $n,v,i_{S_n}$, and $j_{S_n}$.)  We claim that $X$
  has CSP.  To see this, recall that, by hypothesis,
  $w \geq r(a_{j_{S_n},w})$.
    But  $r(a_{j_{S_n},w}) \geq r(b_{i_{S_n},v})$
     since $a_{j_{S_n},w}$
      is an initial subpath of $b_{i_{S_n},v}$.
        So we have that $w\geq r(b_{i_{S_n},v})$
         for all $w\in X$, which yields that $X$
          has CSP with respect to the singleton set $\{r(b_{i_{S_n},v})\}$.

 Now $\{w\in S_n~| \  x_vx_w \neq 0 \ \mbox{and} \ ~b_{i_{S_n},v}^*a_{j_{S_n},w}\neq 0\}$ is the union of the two sets
$$ \{w\in S_n~| \ x_vx_w \neq 0 \ \mbox{and} \ ~b_{i_{S_n},v} \ \mbox{is an initial subpath of } \ a_{j_{S_n},w} \}$$
 $$\cup \ \{w\in S_n~| \  x_vx_w \neq 0 \ \mbox{and} \ a_{j_{S_n},w} \ \mbox{is an initial subpath of } \  ~b_{i_{S_n},v}\}.$$
Since the second displayed set (this is the set $X$)  was demonstrated in the previous paragraph to have CSP, we get that, for any $v\in (S_n)^\sim_{i_{S_n},j_{S_n}}$,  the set
$$\{w\in S_n~| \ x_vx_w \neq 0, \ \mbox{and} \ ~b_{i_{S_n},v}^*a_{j_{S_n},w}\neq 0\}$$
 does not have CSP if and only if the set
 $$S_n'(v) = \{w\in S_n~| \ x_vx_w \neq 0 \ \mbox{and} \ ~b_{i_{S_n},v} \ \mbox{is an initial subpath of } \ a_{j_{S_n},w}\}$$
 does not have CSP.

 We record for later use that in particular this yields that the set $(S_n)^\sim_{i_{S_n},j_{S_n}}$ defined previously in fact equals the set
 $$S_n' \ = \ \{v\in S_n \ | \  \{w\in S_n~| \ x_vx_w \neq 0 \ \mbox{and} \ ~b_{i_{S_n},v} \ \mbox{is an initial subpath of } \ a_{j_{S_n},w}\} \ \mbox{does not have CSP} \},$$
 and so $S_n'$ does not have CSP, and so in particular is nonempty.

Now pick any $v_n\in (S_n)^\sim_{i_{S_n},j_{S_n}}$, and  define $S_{n+1}$ by setting
$$S_{n+1}  \ = \ S_n'(v_n) \ = \  \{w\in S_n~| \ x_{v_n}x_w \neq 0, \ \mbox{and} \ ~b_{i_{S_n},v_n} \ \mbox{is an initial subpath of } \ a_{j_{S_n},w}\}.$$
By definition, $S_{n+1}$ does not have CSP.   We note here the following:  for each $n+1$, there exists a  path $b_{n+1} = b_{i_{S_n},v_n}$ such that $a_{j_{S_n},w}$ has $b_{n+1}$ as an initial subpath for all $w\in S_{n+1}$.   Moreover, $\ell(b_{n+1}) = n_{i_{S_n}}$, and is in particular independent of the choice of $v_n$.

\medskip

Thus we have defined the sequence
$$S = S_0 \supseteq S_1 \supseteq S_2 \supseteq \cdots$$
in such a way that no $S_n$ has CSP.

\vskip 2mm
Since the range of $\pi$ is finite, we see that there exist natural numbers $f$ and $g$ with $f<g$ such that $\pi(S_f)=\pi(S_g)=(i,j)$ for some pair $(i,j)$ in $[d]^2$.   We show that this leads to a contradiction, which will establish the result.

So we have $i_{S_f} = i_{S_g} = i$, and $j_{S_f} = j_{S_g} = j$.  By the above discussion,  there exists a path $b_{f+1}$ of length $n_i$ for which $b_{f+1}$ is an initial subpath of $a_{j,w}$ for all $w\in S_{f+1}$.

Since $S_{f+1} \supseteq S_{g+1}$, this property holds for all elements of $S_{g+1}$ as well.     But we know that there exists a path $b_{g+1}$ of length $n_i$ (n.b.: the {\it same} $n_i$ as in the previous paragraph) for which $b_{g+1}$ is an initial subpath of
$a_{j,w}$
 for all $w\in S_{g+1}$.
So, since the lengths of each of $b_{g+1}$ and $b_{f+1}$ are equal (to $n_i$), and each is the initial subpath of a common path (any $a_{j,w}$ for $w\in S_{g+1}$), we have that the paths  $b_{g+1}$ and $b_{f+1}$ are in fact equal; we denote this common path by $b$.

Recall that the set $S_g'$ is defined above as
$$S_g' \ = \ \{v\in S_g \ | \  \{w\in S_g~| \ x_vx_w \neq 0, \ \mbox{and} \ ~b_{i_{S_g},v} \ \mbox{is an initial subpath of } \ a_{j_{S_g},w}\} \ \mbox{does not have CSP} \},$$
which in this situation can be rewritten as
$$S_g' \ = \ \{v\in S_g \ | \  \{w\in S_g~| \  x_vx_w \neq 0, \ \mbox{and} \ ~b_{i,v} \ \mbox{is an initial subpath of } \ a_{j,w}\} \ \mbox{does not have CSP} \}.$$
Recall also that $S_g'$ does not have CSP.

We claim that
$$S_g' \subseteq \{v\in S_g \ | \  x_vx_w\neq 0 \ \mbox{and} \  b_{i,v} = b\}.$$   So suppose $v\in S_g'$; that is, suppose $v\in S_g$, and that the set
$$\{w\in S_g~| x_vx_w \neq 0, \ \mbox{and} \ ~b_{i,v} \ \mbox{is an initial subpath of } \ a_{j,w}\}$$
  does not have CSP.   We must show that $b_{i,v} = b$.  Since the latter set does not have CSP it is in particular nonempty, so let $w\in S_g$ have the property that  $x_vx_w \neq 0, \ \mbox{and} \ ~b_{i,v}$  is an initial subpath of  $ a_{j,w}$.   Since $w\in S_g\subseteq S_{f+1}$, and $j_{S_f} = j_{S_g} = j$, we have by a previous observation that $a_{j,w}$ has $b$ as an initial subpath.  But the length of $b_{i,v}$ is $n_i$ by definition, which equals the length of $b$, so that $b_{i,v} = b$, thus establishing the claim.

Since $v\geq r(b_{i,v})$ for all $v\in S$ by hypothesis, we see that the set $\{v\in S_g \ | \ x_vx_w\neq 0 \ \mbox{and} \  b_{i,v} = b\}$ has CSP with respect to the singleton set $\{r(b)\}$.    But this leads us to the desired contradiction,  since the last displayed inclusion would yield a set without CSP as a  subset of a set having CSP, which is impossible by Remark \ref{CSPresultsRemark}.
This contradiction completes the proof.
\end{proof}

The next result indicates that the conditions imposed in Proposition \ref{lackofCSPProp} are in fact quite natural in the context of two-sided ideals of $L_K(E)$ generated by vertices of $E$.

\begin{lem}\label{lemmaonelementsinRvR}
Let $E$ be an arbitrary graph, and $K$ any field. Let $v\in E^0$, and let $<v>$ denote the ideal $L_K(E)vL_K(E)$ of $L_K(E)$.   Then any $0\neq x \in \ <v>$ can be written as $x = \sum_{i=1}^n k_ia_ib_i^{\ast}$, where $k_i \in K$, $a_i, b_i$ are paths in $E$, $0\neq k_ia_ib_i^{\ast}$,  and $v \geq r(a_i) = r(b_i)$ for all $1\leq i \leq n$.
\end{lem}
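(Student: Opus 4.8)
The plan is to start from the known spanning description of $L_K(E)$: since $L_K(E)$ is spanned as a $K$-vector space by $\{pq^* \mid p,q \text{ paths in } E\}$, any element of the ideal $\langle v\rangle = L_K(E)vL_K(E)$ can certainly be written as a finite $K$-linear combination of terms of the form $pq^*$. The issue is that an arbitrary spanning expression does not automatically record the constraint that the paths pass through $v$, nor that the ranges of $a_i$ and $b_i$ agree and are dominated by $v$. So the work is in massaging a generic expression into the required normal form.

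First I would write a generator of $\langle v\rangle$ explicitly: a typical element is a finite sum $\sum_j \alpha_j v \beta_j$ with $\alpha_j,\beta_j \in L_K(E)$, and expanding each $\alpha_j,\beta_j$ in the $pq^*$ spanning set reduces the problem to analyzing a single monomial $(p_1 q_1^*)\, v\, (p_2 q_2^*)$. Using relation (V) together with (E1)/(E2) — which force $r(q_1)=v=s(p_2)$ for the term to survive, i.e.\ $q_1^* v = q_1^*$ and $v p_2 = p_2$ exactly when these ranges/sources match $v$ — each surviving monomial becomes $p_1 q_1^* p_2 q_2^*$ with $r(q_1) = r(p_2) = v$. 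Next I would apply the reduction afforded by (CK1): in the product $q_1^* p_2$, repeatedly cancel matching leading edges via $e^* e = r(e)$ and kill mismatched ones via $e^* e' = 0$. The outcome is that $q_1^* p_2$ collapses to a single term of the form $\gamma$ or $\gamma^*$ (one of $q_1, p_2$ being an initial subpath of the other), leaving a monomial $a b^*$ in which the middle has been absorbed.

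The key structural point to extract is the range condition. Because $r(q_1) = r(p_2) = v$, after cancellation the ``tail'' path $\gamma$ runs from $v$ (or toward $v$), so that each resulting monomial $k\, a\, b^*$ satisfies $r(a) = r(b)$ (this is automatic for any nonzero $ab^*$, since $a b^* = a\, r(a) \, r(b)\, b^*$ forces $r(a)=r(b)$ for nonvanishing), and moreover $v \geq r(a)$: the vertex $v$ sits along the path structure, and the surviving tail gives a directed path from $v$ down to the common range $r(a)=r(b)$. I would verify $v \ge r(a)$ by tracking, through the (CK1) cancellations, that the uncancelled portion is a genuine directed path emanating from $v$. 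Finally, after collecting the finitely many surviving monomials and discarding any with zero coefficient, I would relabel to obtain $x = \sum_{i=1}^n k_i a_i b_i^*$ with $k_i \in K$, $0 \neq k_i a_i b_i^*$, and $v \geq r(a_i) = r(b_i)$, as required.

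The main obstacle I anticipate is the bookkeeping in the (CK1) reduction of $q_1^* p_2$: one must argue cleanly that an arbitrary such product reduces to a single monomial of the stated shape and, crucially, that the surviving edges form a directed path witnessing $v \geq r(a_i)$. This is where the ``initial subpath'' dichotomy — either $q_1$ is an initial subpath of $p_2$ or vice versa — does the real work, mirroring exactly the case analysis used later in Proposition \ref{lackofCSPProp}; I would isolate this as the crux and handle the two subcases symmetrically, using that both $q_1$ and $p_2$ have range $v$ so that the leftover tail is anchored at $v$.
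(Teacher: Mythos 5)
Your skeleton is exactly the paper's: expand a generic element of $\langle v\rangle = L_K(E)vL_K(E)$ into monomials of the form $p_1q_1^{\ast}\, v\, p_2q_2^{\ast}$ (the paper writes these as $\alpha\beta^{\ast}v\gamma\delta^{\ast}$), collapse the middle via (CK1) using the dichotomy that one of the two inner paths is an initial subpath of the other, and read off the normal form $k\,a\,b^{\ast}$ with $v \geq r(a)=r(b)$. However, there is a concrete error in your source/range bookkeeping, and it sits precisely at the step that carries the content of the lemma. For $q_1^{\ast}\,v\,p_2$ to survive, relations (V), (E1), (E2) force $s(q_1) = v = s(p_2)$, \emph{not} $r(q_1)=v$: since $q_1^{\ast} = q_1^{\ast}s(q_1)$, one has $q_1^{\ast}v = \delta_{v,s(q_1)}\,q_1^{\ast}$, and the range of $q_1$ plays no role here (in the paper's convention this is the statement that $r(q_1^{\ast})=s(q_1)$). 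Your follow-up assertion that the surviving monomial has ``$r(q_1)=r(p_2)=v$'' is both inconsistent with your own previous line ($r(q_1)=v=s(p_2)$) and false in general, and you then derive the crux $v\geq r(a)$ from this wrong premise: if the inner paths genuinely had range $v$, the leftover tail after (CK1) cancellation would terminate at $v$ rather than emanate from it, and no directed path from $v$ to $r(a)$ would be produced by your hypotheses.

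The repair is short and turns your argument into the paper's. With $s(q_1)=s(p_2)=v$, nonvanishing of $q_1^{\ast}p_2$ forces either $p_2 = q_1\gamma'$ or $q_1 = p_2\beta'$. In the first case the monomial collapses to $(p_1\gamma')q_2^{\ast}$, and $p_2$ itself is a path from $v$ to $r(p_2)=r(\gamma')=r(a)$, witnessing $v\geq r(a)$; in the second case it collapses to $p_1(q_2\beta')^{\ast}$, and $q_1$ witnesses $v \geq r(q_1) = r(b)$; in both cases $r(a)=r(b)$ follows from nonvanishing of $ab^{\ast}$, as you correctly note. So the witnessing path is the \emph{longer} of the two inner paths, both of which are anchored at $v$ by their \emph{sources} — this is the point your orientation slip obscures, and it is the only real gap; everything else in your proposal (the spanning-set reduction, the initial-subpath dichotomy, discarding zero terms at the end) matches the paper's proof.
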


\begin{proof}
By definition, $x$ is a $K$-linear combination of nonzero terms of the form $\alpha \beta^{\ast} v \gamma \delta^{\ast}$, where $\alpha, \beta, \gamma, \delta$ are paths in $E$.    Moreover, $\beta^{\ast} v \gamma \neq 0$ implies $v = s(\beta) = s(\gamma)$, and that either $\beta = \gamma\beta'$ or $\gamma = \beta \gamma'$ for some paths $\beta', \gamma'$ in $E$. With this, the expression $\alpha \beta^{\ast} v \gamma \delta^{\ast}$ simplifies to a term of the form $ab^{\ast}$, and it is readily seen that $v \geq r(a) = r(b)$.
\end{proof}

Here now is the key result regarding the non-primitivity of Leavitt path algebras.

\begin{prop} \label{notCSPimpliesnotprimitiveprop} Let $E$ be an arbitrary graph, and $K$ any field.  If  $E^0$ does not have the Countable Separation Property, then $L_K(E)$ is not primitive.
\end{prop}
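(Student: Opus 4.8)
The plan is to argue by contradiction, leveraging the machinery built up in Lemma \ref{primitivitylemma} and Proposition \ref{lackofCSPProp}. First I would reduce to the unital setting: if $L_K(E)$ is primitive it is prime, so I may apply Lemma \ref{LRSLemmas} to obtain a prime unital $K$-algebra $R_1$ containing $R = L_K(E)$ as a two-sided ideal, with $R_1$ primitive iff $R$ is primitive. Thus it suffices to show $R_1$ is not primitive. By Lemma \ref{primitivitylemma}, primitivity of $R_1$ would supply a proper left ideal $M \neq R_1$ that is comaximal with every nonzero two-sided ideal of $R_1$; in particular $M$ would be comaximal with each ideal $R_1 v R_1$ for $v \in E^0$, since each such ideal is nonzero (as $v$ is a nonzero idempotent and $R_1$ is prime). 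So for each $v \in E^0$ there would exist an element of $M$ of the form $1 - y_v$ with $y_v \in R_1 v R_1$; equivalently $1 + x_v \in M$ where $x_v = -y_v \in R_1 v R_1$.

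The crux is then to show these elements $x_v$ satisfy the hypotheses of Proposition \ref{lackofCSPProp}, but there is a subtlety: those elements live in $R = L_K(E)$, whereas here $y_v \in R_1 v R_1$. I would first observe that since $R$ is an ideal of $R_1$ and $v \in R$, we have $R_1 v R_1 \cap R$ contained in $\langle v \rangle = L_K(E) v L_K(E)$ — more carefully, any element of $R_1 v R_1$ that lies in $R$ can be massaged (using that $v = v \cdot v \cdot v$ and $R$ is an ideal) into an element of $L_K(E) v L_K(E)$, to which Lemma \ref{lemmaonelementsinRvR} applies. The main obstacle is bookkeeping the lengths: Lemma \ref{lemmaonelementsinRvR} writes $y_v = \sum k_i a_i b_i^*$ with $v \geq r(a_i) = r(b_i)$, but the number of terms and the path lengths $\ell(a_i), \ell(b_i)$ vary with $v$, whereas Proposition \ref{lackofCSPProp} demands a \emph{fixed} length $d$ and fixed length-sequences $(m_1,\ldots,m_d)$, $(n_1,\ldots,n_d)$ uniform across the relevant vertex set. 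I would resolve this by a pigeonhole/partition argument: for each $v$, the element $x_v$ determines a finite ``type'' consisting of its number of terms together with the tuple of path lengths appearing. Since $E^0$ does not have CSP and there are only countably many such types (each a finite tuple of nonnegative integers), Remark \ref{CSPresultsRemark}(4),(5) forces some single type to be realized on a subset $S \subseteq E^0$ that still does not have CSP.

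Restricting attention to this $S$, the family $\{x_v \mid v \in S\}$ now has exactly the uniform shape required by Proposition \ref{lackofCSPProp}, with the condition $v \geq r(a_{i,v}) = r(b_{i,v})$ guaranteed by Lemma \ref{lemmaonelementsinRvR}. Proposition \ref{lackofCSPProp} then yields a vertex $v \in S$ for which $Z(S,v) = \{w \in S \mid x_v x_w = 0\}$ does not have CSP; in particular $Z(S,v)$ is nonempty, so there exists $w \in S$ with $x_v x_w = 0$. But then $y_v y_w = 0$ as well, and we have both $1 - y_v = 1 + x_v \in M$ (here I would set $x := -y_v$, $y := -y_w$, noting $xy = y_v y_w = 0$) and $1 + x_w = 1 - y_w \in M$ with the product of the two ``perturbation'' elements vanishing. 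Lemma \ref{lem: zeromult} now applies directly to force $M = R_1$, contradicting the properness of $M$. This contradiction shows $R_1$, and hence $L_K(E)$, is not primitive. The step I expect to be most delicate is the type-partition reduction making the lengths uniform, since one must confirm that passing to the non-CSP subset $S$ does not destroy any of the structural hypotheses (ii)--(iv) of Proposition \ref{lackofCSPProp}, which it does not, as those are pointwise conditions on each $x_v$.
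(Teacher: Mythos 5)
Your proposal is correct and follows essentially the same route as the paper's own proof: the reduction to the unital overring via Lemmas \ref{LRSLemmas} and \ref{primitivitylemma}, the containment $AvA \subseteq L_K(E)vL_K(E)$ (via $v = v\cdot v\cdot v$ and $L_K(E)$ being an ideal), the countable partition of $E^0$ by the shape data $(d,(m_1,\ldots,m_d),(n_1,\ldots,n_d))$ to extract a non-CSP subset of uniform type, and the final application of Proposition \ref{lackofCSPProp} together with Lemma \ref{lem: zeromult}. The ``type-partition'' step you flag as delicate is exactly the paper's $\Gamma$ and $S_\gamma$ construction, handled there just as you propose.
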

\begin{proof}  If $L_K(E)$ is not prime then the result is immediate.  So we may suppose that $L_K(E)$ is prime.  Let $A$ denote the unital overring $L_K(E)_1$ of $L_K(E)$ as described in Lemma \ref{LRSLemmas}.  Just suppose $A$ is primitive; we seek a contradiction.   So by Lemma \ref{primitivitylemma} there is a proper left ideal $I$ of $A$ such that $I+J=A$ for every nonzero two-sided ideal $J$ of $A$.
Consequently, for each vertex $v\in E^0$, there exists some $x_v\in AvA$ such that $1+x_v\in I$.
Since $AvA\subseteq L_K(E)vL_K(E)$, we have  that each $x_v$ can be written as an element of the form indicated in Lemma  \ref{lemmaonelementsinRvR}.  With this in mind, we define
$$\Gamma \ = \ \bigcup_{d=1}^{\infty} \{d\}\times \left( \mathbb{Z}_{\ge 0}^{d}\right) \times  \left( \mathbb{Z}_{\ge 0}^{d}\right),$$ and for each $\gamma =(d,(m_1,m_2,\ldots ,m_d),(n_1,\ldots ,n_d))\in \Gamma$, we set
$$S_\gamma:=\{v\in E^0~|~x_v = \sum_{i=1}^d k_i a_ib_i^*,~k_i\in K, v\geq r(a_i)=r(b_i), \ell(a_i)=m_i, \ \mbox{and} ~\ell(b_i)=n_i~ \ \forall \  i\in [d] \}.$$
(We note that the $S_\gamma$ need not be disjoint.)  By Lemma \ref{lemmaonelementsinRvR} we have that
$\bigcup_{\gamma\in \Gamma} S_\gamma = E^0.$  But $\Gamma$ is countable, so since by hypothesis $E^0$ does not have CSP we get
 that there is some $\gamma\in \Gamma$ such that $S_\gamma$ does not have CSP.  We now apply  Proposition \ref{lackofCSPProp} to the set  $S_\gamma$ to conclude that there exists $v\in S_\gamma$ such that
$$ Z(S_\gamma,v)  =\{w\in S_\gamma~|~x_vx_w=0\}$$ does not have CSP.   In particular $Z(S_\gamma,v)$ is nonempty, so choose some $w\in Z(S_\gamma,v)$.  Then the elements $x_v$ and $x_w$ of $L_K(E)$ corresponding to the vertices $v$ and $w$ respectively have the property that
$$1+x_v \in I, \ 1+x_w \in I, \ \mbox{and} \ x_vx_w = 0.$$ Thus by Lemma \ref{lem: zeromult}, we see that $I=A$, a contradiction.

 Thus $A$ is not primitive, and so $L_K(E)$ is not primitive by Lemma \ref{LRSLemmas}.
\end{proof}

We now have all the pieces in place to establish our main result.

\begin{thm}\label{primitivitytheorem}
Let $E$ be an arbitrary graph, and $K$ any field.  Then $L_K(E)$ is primitive if and only if
\begin{enumerate}[(i)]
\item $E$ satisfies Condition (MT3),
\item $E$ satisfies Condition (L), and
\item $E^0$ has the Countable Separation Property.
\end{enumerate}
\end{thm}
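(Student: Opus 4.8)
The plan is to assemble the theorem from the results already in place, handling the two implications separately and invoking the heavy machinery rather than reproving anything. For the sufficiency direction I would simply cite Proposition \ref{threeconditionsimplyprimitiveprop}: if $E$ satisfies Condition (MT3), Condition (L), and $E^0$ has the Countable Separation Property, then $L_K(E)$ is primitive, with nothing further to do.

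For the necessity direction I would assume $L_K(E)$ is primitive and extract the three conditions one at a time. First, since every primitive ring is prime, Theorem \ref{primenesstheorem} immediately gives Condition (MT3). Second, to obtain Condition (L) I would argue by contradiction exactly as in the converse half of Theorem \ref{primitiverowfinite}: if some cycle $c$ based at a vertex $v$ had no exit, then by \cite[Lemma 1.5]{ABGM} the corner $vL_K(E)v \cong K[x,x^{-1}]$ would be a corner of a primitive ring, hence primitive, contradicting the fact that a commutative primitive ring must be a field. This corner-ring argument nowhere uses row-finiteness, so it transfers verbatim to the arbitrary-graph setting. Third, the Countable Separation Property of $E^0$ is forced by the contrapositive of Proposition \ref{notCSPimpliesnotprimitiveprop}: were $E^0$ to fail CSP, then $L_K(E)$ would not be primitive.

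At the level of this theorem there is essentially no obstacle, since all the substance has been front-loaded into the preceding results. The genuine difficulty lives in Proposition \ref{notCSPimpliesnotprimitiveprop}, and beneath it in the nested-set construction of Proposition \ref{lackofCSPProp}, where the failure of CSP is converted, via Lemma \ref{lem: zeromult} and Lemma \ref{primitivitylemma}, into the non-primitivity of the unital overring $L_K(E)_1$ and thereby of $L_K(E)$. Once those propositions and the primeness criterion are in hand, the present theorem is a short bookkeeping exercise combining Proposition \ref{threeconditionsimplyprimitiveprop} with Proposition \ref{notCSPimpliesnotprimitiveprop}, Theorem \ref{primenesstheorem}, and the corner argument of Theorem \ref{primitiverowfinite}.
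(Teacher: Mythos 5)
Your proposal is correct and matches the paper's own proof essentially verbatim: the paper likewise cites Proposition \ref{threeconditionsimplyprimitiveprop} for sufficiency, observes that the necessity of Conditions (MT3) and (L) follows from the converse half of Theorem \ref{primitiverowfinite} (whose corner-ring and primeness arguments do not use row-finiteness), and derives the necessity of CSP directly from Proposition \ref{notCSPimpliesnotprimitiveprop}. Your added remark that all the substance is front-loaded into the preceding propositions is an accurate reading of how the paper organizes the argument.
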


\begin{proof}
That the three conditions are sufficient is established in Proposition \ref{threeconditionsimplyprimitiveprop}, while the necessity of Conditions (MT3) and (L) was established in the proof of Theorem \ref{primitiverowfinite} (the row-finiteness hypothesis is not used in those portions of that proof).

 The final piece of the proof of Theorem \ref{primitivitytheorem} (i.e., the necessity of Countable Separation Property) now follows directly from Proposition \ref{notCSPimpliesnotprimitiveprop}.
\end{proof}

\section{Prime non-primitive von Neumann regular rings}\label{primenotprimitivevNrsection}

As mentioned in the Introduction, for many years Kaplansky's  query  ``{\it Is a regular prime ring necessarily primitive?}"  was regarded as the major question in the theory of von Neumann regular rings.   Theorem \ref{primitivitytheorem} will allow us to identify various infinite classes of  rings for which the answer to Kaplansky's question is {\it No}.  We begin by noting a structural property of the Leavitt path algebras defined previously.

\begin{lem}\label{cardinalitylemma}  Let $K$ be any field.

  \begin{enumerate}
  \item Let $X$ be an infinite set, let  $|X|$ denote the cardinality of $X$, and let $E_{\mathcal{F}(X)}$ be the graph described in Definition \ref{EsubTsubXdefinition}.  Then ${\rm dim}_K(L_K(E_{\mathcal{F}(X)})) = |X|$.

  \item Let $\kappa$ be any infinite ordinal, let $|\kappa|$ denote the cardinality of $\kappa$, and let $E_{\kappa}$ be the graph described in Definition \ref{Esubkappadefinition}.  Then ${\rm dim}_K(L_K(E_\kappa)) = |\kappa|$.
\end{enumerate}
\end{lem}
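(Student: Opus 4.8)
The plan is to sandwich $\dim_K L_K(E)$ between two equal cardinals in each of the two cases, handling (1) and (2) by the same uniform argument. The upper bound comes from the standard spanning set, and the lower bound from the linear independence of the vertices; the only real work is a cardinality count of the paths in each graph.

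For the upper bound, recall (from the discussion preceding Definition \ref{definition}) that $L_K(E)$ is spanned as a $K$-vector space by $\{pq^{\ast}\mid p,q \text{ are paths in } E\}$, so $\dim_K L_K(E)\le |P|^2$, where $P$ is the set of all paths in $E$ (vertices included as paths of length $0$). The crux is thus to count $P$. In both $E_{\mathcal{F}(X)}$ and $E_\kappa$ the edge set is governed by a strict order relation with exactly one edge between any ordered comparable pair of vertices; consequently a path of length $n$ is faithfully encoded by the strictly increasing chain of vertices it visits, $A_0\subsetneqq A_1\subsetneqq\cdots\subsetneqq A_n$ (respectively $\alpha_0<\alpha_1<\cdots<\alpha_n$). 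Each such path is therefore a finite sequence of distinct vertices, so $|P|\le \sum_{n=0}^{\infty}|E^0|^{\,n+1}=\aleph_0\cdot|E^0|=|E^0|$, using that $|E^0|$ is infinite. Since $|\mathcal{F}(X)|=|X|$ for infinite $X$ and $|E_\kappa^0|=|\kappa|$, this gives $|P|\le|X|$ (resp. $|\kappa|$), whence $\dim_K L_K(E)\le|X|^2=|X|$ (resp. $|\kappa|$).

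For the lower bound, the set $\{v\mid v\in E^0\}$ consists of nonzero pairwise orthogonal idempotents and is therefore $K$-linearly independent, so $\dim_K L_K(E)\ge|E^0|$, which equals $|X|$ (resp. $|\kappa|$) by the same cardinality computations. Combining the two bounds yields the asserted equalities. I expect no serious obstacle here: the entire technical content is the cardinal arithmetic verifying that the spanning set has size at most $|X|$ (resp. $|\kappa|$), which rests on the standard facts that an infinite cardinal $\lambda$ satisfies $\lambda^n=\lambda$ for finite $n\ge 1$ and $\aleph_0\cdot\lambda=\lambda$, together with $|\mathcal{F}(X)|=|X|$. The single point worth stating carefully is that distinct edges correspond to distinct ordered pairs of comparable vertices, so that a path is determined by the finite strictly increasing sequence of vertices it traverses; this is exactly what reduces the count of paths to a count of finite increasing sequences and keeps it at $|E^0|$.
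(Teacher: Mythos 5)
Your proposal is correct and follows essentially the same route as the paper: linear independence of the (nonzero, orthogonal idempotent) vertices gives the lower bound $|E^0|$, and cardinal arithmetic on the spanning set $\{pq^*\}$ gives the matching upper bound. Your encoding of paths by strictly increasing vertex chains is just a slightly more explicit version of the paper's count of paths as finite edge sequences, and your uniform treatment of cases (1) and (2) matches the paper's ``the proof of (2) is similar.''
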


\begin{proof}
(1)   Since $|X|$ is infinite, the cardinality of the set $\mathcal{F}(X)$ of finite subsets of $X$ is  $|X|$.  Thus the graph $E_{\mathcal{F}(X)}$ contains $|X|$ vertices.  This set of vertices, being nonzero orthogonal idempotents of $L_K(E_{\mathcal{F}(X)})$, are  necessarily $K$-linearly independent.  So ${\rm dim}_K(L_K(E_{\mathcal{F}(X)})) \geq |X|$.

On the other hand, each vertex of $E_{\mathcal{F}(X)}$ emits $|X|$ edges.  Since by definition a path in $E$ is a finite sequence of edges, and there are $|X|$ vertices in $E_{\mathcal{F}(X)}$, this yields that there are $|X|$ distinct paths in $E$, so that there are $|X|$ expressions of the form $pq^*$ in $L_K(E)$ (where $p$ and $q$ are paths in $E$).  But for any Leavitt path algebra, the set $\{pq^* \ | \ p,q  $ are paths in $E\}$ spans $L_K(E)$ as a $K$-vector space.   Thus ${\rm dim}_K(L_K(E_{\mathcal{F}(X)})) \leq |X|$, which yields the result.

\smallskip

The proof of (2) is similar.
\end{proof}

\begin{prop}\label{primenotprimitivecor}
Let $K$ be any field.  Let $X$ be any nonempty set, and let $E_{\mathcal{F}(X)}$ be the graph presented in Definition \ref{EsubTsubXdefinition}.
 Then $L_K(E_{\mathcal{F}(X)})$ is a prime ring.  Moreover, $L_K(E_{\mathcal{F}(X)})$  is primitive if and only if $X$ is at most countable.


\end{prop}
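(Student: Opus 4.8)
The plan is to reduce both assertions to the combinatorial criteria already proved, namely Theorem \ref{primenesstheorem} for primeness and Theorem \ref{primitivitytheorem} for primitivity, and then to verify the relevant graph conditions directly for $E_{\mathcal{F}(X)}$. Essentially all of the work has been packaged into those two theorems together with Proposition \ref{EsubTsubXhasCSPiffXcountableprop}, so the task here is purely one of checking hypotheses.

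First I would establish primeness. By Theorem \ref{primenesstheorem} it suffices to check that $E_{\mathcal{F}(X)}$ satisfies Condition (MT3). Given two vertices $A,B \in \mathcal{F}(X)$, I would take $C = A\cup B$, which is again a nonempty finite subset of $X$, hence a vertex of $E_{\mathcal{F}(X)}$. Since $A\subseteq C$, either $A=C$ (so that $A\geq C$ trivially) or $A\subsetneqq C$, in which case the edge $e_{A,C}$ witnesses $A\geq C$; the same argument applied to $B$ gives $B\geq C$. Thus (MT3) holds, and so $L_K(E_{\mathcal{F}(X)})$ is prime for every nonempty set $X$.

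For the primitivity statement I would appeal to Theorem \ref{primitivitytheorem}, which asserts that $L_K(E_{\mathcal{F}(X)})$ is primitive exactly when $E_{\mathcal{F}(X)}$ satisfies (MT3), satisfies (L), and $E_{\mathcal{F}(X)}^0$ has CSP. Condition (MT3) was just verified, and holds regardless of the cardinality of $X$. Condition (L) holds vacuously: along any path $e_{A_0,A_1}e_{A_1,A_2}\cdots$ one has $A_0\subsetneqq A_1\subsetneqq A_2\subsetneqq\cdots$, so the (finite) cardinalities of the vertices strictly increase and no vertex can recur; hence $E_{\mathcal{F}(X)}$ is acyclic, and every cycle (of which there are none) trivially has an exit. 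Consequently Theorem \ref{primitivitytheorem} collapses to the single requirement that $E_{\mathcal{F}(X)}^0$ have CSP, and Proposition \ref{EsubTsubXhasCSPiffXcountableprop} identifies exactly this with $X$ being at most countable.

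There is no genuinely hard step in this argument: the substance resides entirely in the previously established Theorems \ref{primenesstheorem} and \ref{primitivitytheorem} and in Proposition \ref{EsubTsubXhasCSPiffXcountableprop}. The only points demanding any attention are the two structural observations for $E_{\mathcal{F}(X)}$ -- that taking the union $A\cup B$ realizes (MT3), and that the strict inclusions defining the edges force acyclicity so that (L) is automatic. Once these are in place the equivalence ``$L_K(E_{\mathcal{F}(X)})$ primitive $\iff$ $X$ at most countable'' follows immediately from Theorem \ref{primitivitytheorem} and the cited characterization of CSP.
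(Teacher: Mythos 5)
Your proposal is correct and follows exactly the paper's own argument: verify Condition (MT3) via $A\cup B$, observe that the strict inclusions make $E_{\mathcal{F}(X)}$ acyclic so Condition (L) holds vacuously, and then invoke Theorem \ref{primenesstheorem}, Theorem \ref{primitivitytheorem}, and Proposition \ref{EsubTsubXhasCSPiffXcountableprop}. The only difference is that you spell out the routine verifications (the $A=C$ edge case and the cardinality argument for acyclicity) that the paper declares ``clear.''
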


\begin{proof}
It is clear that $E_{\mathcal{F}(X)}$ is acyclic,  and thereby satisfies Condition (L) vacuously. Since the union of two finite sets is finite,    Condition (MT3) holds in $E_{\mathcal{F}(X)}$ as well.  In particular, $L_K(E_{\mathcal{F}(X)})$ is prime by Theorem \ref{primenesstheorem}.    But by Proposition \ref{EsubTsubXhasCSPiffXcountableprop}, $E^0$ has CSP precisely when $X$ is at most countable.  The result now follows from  Theorem \ref{primitivitytheorem}.
\end{proof}

Here is the first of the previously mentioned classes of prime, non-primitive, von Neumann regular algebras.

\begin{thm}\label{primenotprimitivevNrthmusingEsubTsubX}
Let $K$ be any field.  Let $X$ be an uncountable set, and let $E_{\mathcal{F}(X)}$ be the graph presented in Definition \ref{EsubTsubXdefinition}.
 Then $L_K(E_{\mathcal{F}(X)})$ is a prime, non-primitive, von Neumann regular ring.

 In particular, let $\{X_\alpha  \ | \  \alpha \in \mathcal{A}\}$ denote a collection of uncountable sets, each of different cardinality.   Then the collection $\{L_\mathbb{C}(E_{\mathcal{F}({X_\alpha})}) \ | \ \alpha \in \mathcal{A}\}$ consists of pairwise nonisomorphic, prime, non-primitive, von Neumann regular $\mathbb{C}$-algebras.
\end{thm}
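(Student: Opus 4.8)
The plan is to assemble the statement entirely from ingredients already established. For the first assertion, I would note that primeness and non-primitivity are immediate from Proposition \ref{primenotprimitivecor}: that result shows $L_K(E_{\mathcal{F}(X)})$ is always prime, and is primitive precisely when $X$ is at most countable. Since $X$ is uncountable here, $L_K(E_{\mathcal{F}(X)})$ is prime but not primitive. The only remaining point is von Neumann regularity. For this I would invoke the characterization of \cite{AR}, namely that $L_K(E)$ is von Neumann regular whenever $E$ is acyclic. As already observed in the proof of Proposition \ref{primenotprimitivecor}, the graph $E_{\mathcal{F}(X)}$ is acyclic: an edge $e_{A,A'}$ exists only when $A\subsetneqq A'$, so along any path the source is strictly contained in the range and no path can return to its source. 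Hence $L_K(E_{\mathcal{F}(X)})$ is von Neumann regular, which completes the first part.

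For the second assertion, I would first apply the first part to each $X_\alpha$: since each $X_\alpha$ is uncountable, every $L_\mathbb{C}(E_{\mathcal{F}(X_\alpha)})$ is a prime, non-primitive, von Neumann regular $\mathbb{C}$-algebra. The only thing left is to show these algebras are pairwise non-isomorphic. Here I would use Lemma \ref{cardinalitylemma}(1), which gives ${\rm dim}_\mathbb{C}(L_\mathbb{C}(E_{\mathcal{F}(X_\alpha)})) = |X_\alpha|$. Since any $\mathbb{C}$-algebra isomorphism is in particular a $\mathbb{C}$-vector-space isomorphism and therefore preserves $\mathbb{C}$-dimension, two such algebras can be isomorphic only if the corresponding cardinalities coincide. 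As the cardinalities $|X_\alpha|$ are pairwise distinct by hypothesis, the algebras are pairwise non-isomorphic, as claimed.

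The argument presents no genuine obstacle, since every component has been proved earlier in the paper; this theorem functions as a harvest of the machinery. The two points requiring a modicum of care are the correct appeal to \cite{AR} for the implication ``acyclic $\Rightarrow$ von Neumann regular'' (only this one direction is needed), and the observation that a $K$-algebra isomorphism preserves $K$-dimension, which is what converts the distinct cardinalities of the $X_\alpha$ into distinctness of the algebras up to isomorphism.
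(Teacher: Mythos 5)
Your proposal is correct and follows essentially the same route as the paper's own proof: von Neumann regularity via the acyclicity characterization of \cite[Theorem 1]{AR}, primeness and non-primitivity via Proposition \ref{primenotprimitivecor}, and pairwise non-isomorphism via the dimension count of Lemma \ref{cardinalitylemma}. The only difference is cosmetic---you spell out the acyclicity check and the fact that $K$-algebra isomorphisms preserve $K$-dimension, while the paper additionally remarks (parenthetically) that the algebras are not even isomorphic as rings.
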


\begin{proof}
It was established in \cite[Theorem 1]{AR} that the von Neumann regular Leavitt path algebras are precisely those corresponding to acyclic graphs. In particular, each $L_\mathbb{C}(E_{\mathcal{F}({X_\alpha)}})$ is von Neumann regular.  The first statement now follows from Proposition  \ref{primenotprimitivecor}.   That any two such algebras are nonisomorphic follows from Lemma \ref{cardinalitylemma}.  (Indeed, we note that in this case these algebras are in fact not even isomorphic {\it as rings}, by a similar cardinality argument.)
\end{proof}

In a completely analogous manner, we achieve as well the second aforementioned class of prime, non-primitive, von Neumann regular algebras.

\begin{thm}\label{primenotprimitivevNrthmusingEsubkappa}
Let $K$ be any field.  Let $\kappa$ be an ordinal with uncountable cofinality, and let $E_{\kappa}$ be the graph presented in Definition \ref{Esubkappadefinition}.
 Then $L_K(E_\kappa)$ is a prime, non-primitive, von Neumann regular ring.

 In particular, let $\{\kappa_\alpha \ | \ \alpha \in \mathcal{A}\}$ denote a collection of  uncountable ordinals, each of uncountable cofinality, and each having different cardinality.   Then the collection $\{L_\mathbb{C}(E_{\kappa_\alpha}) \ | \ \alpha \in \mathcal{A}\}$ consists of pairwise nonisomorphic, prime, non-primitive, von Neumann regular $\mathbb{C}$-algebras.
\end{thm}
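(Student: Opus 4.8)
The plan is to mimic exactly the proof of Theorem \ref{primenotprimitivevNrthmusingEsubTsubX}, verifying for $E_\kappa$ the three graph-theoretic conditions of Theorem \ref{primitivitytheorem} and then invoking the same cited structural results. First I would observe that $E_\kappa$ is acyclic: every edge $e_{\alpha,\beta}$ satisfies $\alpha < \beta$, so traversing any directed path strictly increases the associated ordinal, whence no directed path can return to its source vertex. Acyclicity gives Condition (L) vacuously, and, by \cite[Theorem 1]{AR}, it simultaneously yields that $L_K(E_\kappa)$ is von Neumann regular.

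Next I would verify Condition (MT3). Given vertices $\alpha,\beta < \kappa$, set $\gamma = \max(\alpha,\beta)$; then $\gamma < \kappa$, and since $E_\kappa$ contains the edge $e_{\delta,\gamma}$ whenever $\delta < \gamma$, we have $\alpha \geq \gamma$ and $\beta \geq \gamma$ in the graph preorder (each of $\alpha,\beta$ either equals $\gamma$ or reaches it along a single edge). Thus Condition (MT3) holds, and Theorem \ref{primenesstheorem} yields that $L_K(E_\kappa)$ is prime. For the failure of primitivity, I would invoke Proposition \ref{EsubkappahasCSPiffkappacountablecofinalityprop}: because $\kappa$ has uncountable cofinality, $E_\kappa^0$ does \emph{not} have CSP, so Theorem \ref{primitivitytheorem} forces $L_K(E_\kappa)$ to be non-primitive. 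Combining this with primeness and von Neumann regularity establishes the first assertion of the theorem.

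For the ``in particular'' statement, I would appeal to Lemma \ref{cardinalitylemma}(2), which gives $\dim_{\mathbb{C}}\bigl(L_\mathbb{C}(E_{\kappa_\alpha})\bigr) = |\kappa_\alpha|$ for each $\alpha \in \mathcal{A}$. Since the ordinals $\kappa_\alpha$ are chosen with pairwise distinct cardinalities, the corresponding algebras have pairwise distinct $\mathbb{C}$-dimensions and are therefore pairwise nonisomorphic as $\mathbb{C}$-algebras; each is prime, non-primitive, and von Neumann regular by the first part (each $\kappa_\alpha$ having uncountable cofinality as required).

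I expect no serious obstacle here, as every step reduces to a result already established in the paper. The only points demanding any care are the two elementary verifications for $E_\kappa$ — acyclicity, which follows from the observation that paths strictly increase the ordinal label, and Condition (MT3), which follows from the closure of the ordinals below $\kappa$ under the binary maximum — both of which are entirely routine.
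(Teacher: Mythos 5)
Your proposal is correct and follows essentially the same route as the paper, which proves this theorem ``in a completely analogous manner'' to Theorem \ref{primenotprimitivevNrthmusingEsubTsubX}: acyclicity gives Condition (L) and von Neumann regularity via \cite[Theorem 1]{AR}, the maximum of two ordinals gives Condition (MT3) and hence primeness, the failure of CSP (Proposition \ref{EsubkappahasCSPiffkappacountablecofinalityprop}) combined with Theorem \ref{primitivitytheorem} gives non-primitivity, and Lemma \ref{cardinalitylemma}(2) gives pairwise nonisomorphism. Your two elementary verifications for $E_\kappa$ (paths strictly increase ordinal labels; $\max(\alpha,\beta)<\kappa$) are exactly the routine checks the paper leaves implicit.
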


\begin{rem}\label{remarkthatclassesarenotthesameuptoisomorphism}
{\rm We note that the  algebras described in Theorems \ref{primenotprimitivevNrthmusingEsubTsubX} and  \ref{primenotprimitivevNrthmusingEsubkappa} are different up to isomorphism one from the other; that is, $L_K(E_{\mathcal{F}(X)}) \not\cong L_K(E_\kappa)$ for any uncountable set $X$ and any uncountable ordinal $\kappa$.  The reason is as follows (see e.g. \cite{T} for further details):  since each of the graphs $E_{\mathcal{F}(X)}$ and $E_\kappa$ has Condition (K), every ideal in each of the algebras $L_K(E_{\mathcal{F}(X)})$ and $L_K(E_\kappa)$ is graded.  By \cite[Theorem 5.7]{T}, for any graph $E$ the lattice of graded ideals of $L_K(E)$ is isomorphic to the lattice $\mathcal{L}_E$ of pairs $(H,S)$, where $H$ is a hereditary saturated subset of $E^0$ and $S$ is a subset of the breaking vertex set $B_H$.   It is routine to show that for $E= E_{\mathcal{F}(X)}$ there are ${\rm card}(X)$ (proper) maximal elements in $\mathcal{L}_E$, while for $F = E_\kappa$ there is exactly one (proper) maximal element in $\mathcal{L}_F$. Thus the ideal lattices of algebras in the two classes are distinct, and therefore any two such algebras are nonisomorphic.

}

\end{rem}

 For those readers whose tastes run more towards unital rings, we now show that the previously offered examples may be slightly and easily modified to produce germane classes of such algebras.  Let $K$ be a field and let $R$ be any $K$-algebra (not necessarily with multiplicative identity).  The {\it $K$-algebra  unitization of} $R$ is the $K$-algebra
$$\widehat{R}=K\oplus R,$$ with coordinate addition and $K$-scalar action, and with ring multiplication given by setting  $$(k,r)\cdot (l,s)=(kl,ks+lr+rs)$$ for
any $(k,r),(l,s)\in \widehat{R}$.   Then $\widehat{R}$ is a $K$-algebra with identity $(1,0)$, and $R$ embeds as a two-sided algebra ideal of $\widehat{R}$ of codimension 1, via
 the map $r\mapsto (0,r)$.
Let
$$T = \{y\in \widehat{R} \ | \ y(0,I) = \{(0,0)\} \}$$
   for some nonzero two-sided ideal  $I$ of $R$.  It is shown in the proof of \cite[Lemma 2]{LRS} that if $R$ is prime, then $T$ is a prime $K$-algebra ideal of $\widehat{R}$, and that $T \cap (0,R) = \{(0,0)\}$.  (The primeness of $R$ is needed to show that $T$ is closed under addition.)   Indeed, the unital overring $R_1$ of $R$ utilized in previous sections of the current article is precisely the quotient ring $\widehat{R}/T$.

We show now that the $K$-algebra unitizations of appropriate Leavitt path algebras provide a class of examples of unital, prime, non-primitive, von Neumann regular algebras.

\begin{lem}\label{conditionspasstowidehatRlemma}
Let $K$ be any field, and $R$ any $K$-algebra.

\begin{enumerate}
\item  $R$ is prime and nonunital  if and only if $\widehat{R}$ is prime.
 \item $R$ is primitive and nonunital if and only if $\widehat{R}$ is primitive.
     \item $R$ is von Neumann regular if and only if $\widehat{R}$ is von Neumann regular.
 \end{enumerate}
\end{lem}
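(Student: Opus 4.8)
The three parts share a common backbone: $R$ embeds in $\widehat{R}$ as a two-sided ideal with $\widehat{R}/R \cong K$, and $\widehat{R}$ is always unital; throughout I would assume $R \neq 0$, since the applications concern the nonzero algebras $L_K(E)$. For part (1) the decisive observation is that, \emph{when $R$ is prime and nonunital, $R$ is an essential ideal of $\widehat{R}$}, i.e. $A \cap R \neq 0$ for every nonzero ideal $A$ of $\widehat{R}$. To see this I would take $0 \neq (k,r)\in A$ and multiply it on both sides by arbitrary $(0,s)$ with $s\in R$; assuming $A\cap R = 0$ then forces $ks+sr = 0 = ks+rs$ for all $s\in R$. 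If $k\neq 0$ this exhibits $-k^{-1}r$ as a two-sided identity of $R$, contradicting nonunitality; hence $k=0$, so $(k,r)\in A\cap R = 0$, a contradiction. Granting essentiality, primeness of $\widehat{R}$ is immediate: for nonzero ideals $A,B$ of $\widehat{R}$ the ideals $A\cap R$ and $B\cap R$ of $R$ are nonzero, so $(A\cap R)(B\cap R)\neq 0$ by primeness of $R$, and this product lies in $AB$. Conversely, $K$-algebra ideals $I$ of $R$ correspond to ideals $(0,I)$ of $\widehat{R}$ contained in $R$, so a zero-product pair of nonzero ideals of $R$ would yield one in $\widehat{R}$; thus $\widehat{R}$ prime implies $R$ prime. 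Finally, if $R$ were unital with identity $u$, then $(0,u)$ would be a central idempotent splitting $\widehat{R}\cong R\times K$, which is not prime, so $\widehat{R}$ prime forces $R$ nonunital.

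For part (2) I would transfer a faithful simple module across the inclusion $R \trianglelefteq \widehat{R}$, using part (1) for the primeness and nonunitality bookkeeping. For the implication $\widehat{R}$ primitive $\Rightarrow R$ primitive and nonunital: $\widehat{R}$ primitive is prime, so by (1) $R$ is nonunital; restricting a faithful simple $\widehat{R}$-module $V$ to $R$, one checks $RV=V$ (else $R\subseteq \mathrm{ann}\,V = 0$), that $V$ is $R$-faithful, and that $V$ is $R$-simple, using that $\{v : Rv=0\}$ is an $\widehat{R}$-submodule (since $R(sv)=(Rs)v\subseteq Rv$) hence $0$, and that each nonzero $Rv$ is an $\widehat{R}$-submodule hence all of $V$. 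For the reverse implication: given a faithful simple $R$-module $V$, which carries a $K$-vector space structure on which $R$ acts $K$-linearly, I would extend the action to $\widehat{R}$ by letting $(1,0)$ act as the identity, i.e. $(k,r)v = kv + rv$. This is a well-defined $\widehat{R}$-action (associativity uses $K$-linearity of the $R$-action), it is simple because $\widehat{R}$-submodules are exactly $R$-submodules, and it is faithful by the same mechanism as in part (1): if $(k,r)$ annihilates $V$ with $k\neq 0$, then $-k^{-1}r$ acts as the identity on $V$, and $R$-faithfulness upgrades this to $-k^{-1}r$ being a genuine identity of $R$, contradicting nonunitality.

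Part (3) I expect to be the most routine. For $\widehat{R}$ regular $\Rightarrow R$ regular: if $a\in R$ and $a=aXa$ with $X\in\widehat{R}$, then $x:=XaX$ lies in $R$ (as $R$ is an ideal) and satisfies $axa = aXaXa = (aXa)(Xa) = a$, so the ideal $R$ inherits regularity. For $R$ regular $\Rightarrow \widehat{R}$ regular I would invoke the standard fact that if a two-sided ideal $I$ and the quotient $S/I$ are both von Neumann regular and idempotents lift modulo $I$, then $S$ is von Neumann regular; here $I=R$ is regular, $\widehat{R}/R\cong K$ is a field and hence regular, and the only idempotents $0,1$ of $K$ lift to $(0,0),(1,0)\in\widehat{R}$, so the hypotheses hold automatically.

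The step I expect to be the genuine obstacle is the faithfulness verification in the extension half of part (2): it is precisely there that the \emph{nonunital} hypothesis is indispensable and must be converted, via $K$-linearity and $R$-faithfulness, into a contradiction with the existence of an identity for $R$ — the same computation that powers the essentiality argument in part (1). The only other point demanding explicit care is the assertion that a simple $R$-module carries a compatible $K$-vector space structure; this is true for modules $V$ over a $K$-algebra with $RV=V$ (for instance because the multiplier algebra of $R$, which contains a central copy of $K$, acts on nondegenerate modules), and I would state it explicitly rather than leave it implicit.
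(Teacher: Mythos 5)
Your proposal is correct, but it takes a genuinely different route from the paper, whose proof of this lemma is almost entirely by citation: statement (1) is deferred to Mesyan's general theory of ideal extensions (\cite[Theorem 28]{Mes}, or \cite[Corollary 13]{Mes2}), statement (2) to Lemma \ref{LRSLemmas} or \cite[Corollary 17]{Mes2}, and only statement (3) receives an argument --- essentially the two sentences you reproduce (ideals of regular rings are regular; regularity is closed under extensions since $\widehat{R}/R\cong K$ is regular; note that this extension fact requires no idempotent-lifting hypothesis, so your appeal to lifting, while harmless since you verify it, is superfluous). You instead prove (1) and (2) from first principles: for (1), essentiality of $R$ in $\widehat{R}$, powered by the computation that $k\neq 0$ would force $-k^{-1}r$ to be a two-sided identity of $R$; for (2), restriction and extension of faithful simple modules along the ideal inclusion $R\trianglelefteq \widehat{R}$, with the same computation converting $\widehat{R}$-faithfulness plus nonunitality into a contradiction. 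The paper's route buys brevity and generality (unitization is the simplest instance of Mesyan's ideal-extension setting, and Lemma \ref{LRSLemmas} is already in play elsewhere in the paper); yours buys transparency, making visible exactly where nonunitality enters, which the paper leaves hidden in the references. The step you flagged --- the $K$-vector space structure on a simple $R$-module $V$ --- is handled correctly and deserves the explicit statement you promise: since $RV=V$ and $\{v\in V : Rv=0\}$ is a proper $R$-submodule hence zero, the assignment $k\cdot\sum r_iv_i := \sum (kr_i)v_i$ is well defined, which is the concrete form of your multiplier-algebra remark. Two small points of hygiene: your standing assumption $R\neq 0$ is genuinely necessary, not merely convenient (for $R=0$ one has $\widehat{R}=K$ prime and primitive while $R$ is neither, so the stated equivalences fail at the zero ring); and in (2) your parenthetical claim that $\widehat{R}$-submodules are \emph{exactly} the $R$-submodules overstates what is true --- an arbitrary $R$-submodule need not be $K$-stable a priori --- but only the inclusion you actually use, that every $\widehat{R}$-submodule is an $R$-submodule, is needed for simplicity to transfer.
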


\begin{proof}
Statement (1) follows from \cite[Theorem 28]{Mes} (or from the more focused statement \cite[Corollary 13]{Mes2}), while   Statement (2) can be deduced from Lemma \ref{LRSLemmas} or \cite[Corollary 17]{Mes2}.  For Statement (3), it is easy to show that any ideal of a von Neumann regular ring is itself von Neumann regular.  Conversely, since $\widehat{R}/R \cong K$ is von Neumann regular, then the regularity of $R$ yields the regularity if $\widehat{R}$ by a standard argument.
\end{proof}

We now produce our third class of prime, non-primitive, von Neumann regular algebras.
\begin{thm}\label{unitalexamplesEsubTsubX}
Let $K$ be any field, and let $X$ be an uncountable set.  Then $\widehat{L_K(E_{\mathcal{F}(X)})}$ is a unital, prime, non-primitive, von Neumann regular ring.

 In particular, let $\{X_\alpha \  | \ \alpha \in \mathcal{A}\}$ denote a collection of uncountable sets, each of different cardinality.   Then the collection $\{\widehat{L_\mathbb{C}(E_{\mathcal{F}({X_\alpha)}})} \  | \ \alpha \in \mathcal{A}\}$ consists of pairwise nonisomorphic,  unital, prime, non-primitive, von Neumann regular $\mathbb{C}$-algebras.
\end{thm}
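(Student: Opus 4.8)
The plan is to assemble this result directly from the machinery already in place, treating the unitization as a routine transfer device. First I would invoke Theorem \ref{primenotprimitivevNrthmusingEsubTsubX}, which tells us that for an uncountable set $X$ the Leavitt path algebra $R := L_K(E_{\mathcal{F}(X)})$ is prime, non-primitive, and von Neumann regular. The key observation to record at the outset is that $R$ is \emph{nonunital}: since $X$ is uncountable the vertex set $\mathcal{F}(X)$ is infinite, and by the remark early in the paper that $L_K(E)$ is unital if and only if $E^0$ is finite, $R$ cannot have a multiplicative identity. This nonunitality is exactly the hypothesis needed to feed into the biconditionals of Lemma \ref{conditionspasstowidehatRlemma}.

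Next I would transfer each of the three properties to $\widehat{R}$ using Lemma \ref{conditionspasstowidehatRlemma}. Part (1) of that lemma gives that $R$ prime and nonunital implies $\widehat{R}$ is prime; part (3) gives that $R$ von Neumann regular implies $\widehat{R}$ von Neumann regular; and for non-primitivity I would argue contrapositively through part (2): if $\widehat{R}$ were primitive, then since $R$ is nonunital the biconditional in (2) would force $R$ to be primitive, contradicting Theorem \ref{primenotprimitivevNrthmusingEsubTsubX}. Of course $\widehat{R} = K \oplus R$ is unital with identity $(1,0)$ by construction. This handles the first sentence of the theorem completely, with no real obstacle.

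For the ``in particular'' statement, the remaining content is the pairwise non-isomorphism of the family $\{\widehat{L_\mathbb{C}(E_{\mathcal{F}(X_\alpha)})} \mid \alpha \in \mathcal{A}\}$, where the $X_\alpha$ have distinct cardinalities. The natural approach is a cardinality/dimension count. By Lemma \ref{cardinalitylemma}(1) we have $\dim_\mathbb{C}(L_\mathbb{C}(E_{\mathcal{F}(X_\alpha)})) = |X_\alpha|$, and since $\widehat{R} = \mathbb{C} \oplus R$ adds only one dimension, $\dim_\mathbb{C}(\widehat{L_\mathbb{C}(E_{\mathcal{F}(X_\alpha)})}) = |X_\alpha|$ as well (each $|X_\alpha|$ being infinite). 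A $\mathbb{C}$-algebra isomorphism preserves $\mathbb{C}$-dimension, so distinct cardinalities $|X_\alpha|$ force the corresponding unitizations to be non-isomorphic as $\mathbb{C}$-algebras. As in Theorem \ref{primenotprimitivevNrthmusingEsubTsubX}, one can strengthen this to non-isomorphism \emph{as rings} by observing that the cardinality of the algebra itself is $|X_\alpha|$, which is a ring-theoretic (indeed set-theoretic) invariant.

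The only point requiring any care — and the closest thing to an obstacle — is being precise that the two biconditionals in Lemma \ref{conditionspasstowidehatRlemma} are applied in the correct direction, with the nonunitality of $R$ as an essential hypothesis rather than an afterthought; without it, part (2) would not let us deduce non-primitivity of $\widehat{R}$. Everything else is a direct citation of already-established results, so I expect the proof to be short.
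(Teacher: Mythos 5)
Your proof is correct and follows essentially the same route as the paper's: both transfer primeness, von Neumann regularity, and non-primitivity to the unitization via Lemma \ref{conditionspasstowidehatRlemma} (the paper cites Theorem \ref{primenesstheorem}, \cite[Theorem 1]{AR}, and Proposition \ref{notCSPimpliesnotprimitiveprop} directly where you cite the packaged Theorem \ref{primenotprimitivevNrthmusingEsubTsubX} --- a purely cosmetic difference), and both settle pairwise non-isomorphism via Lemma \ref{cardinalitylemma} together with the observation that $\widehat{R}$ is a one-dimensional extension of $R$. One small caution on your closing aside: the underlying set of $\widehat{L_\mathbb{C}(E_{\mathcal{F}(X_\alpha)})}$ has cardinality $\max(2^{\aleph_0}, |X_\alpha|)$, so raw cardinality distinguishes these \emph{as rings} only when the $|X_\alpha|$ exceed the continuum --- but that remark is beyond the stated theorem and does not affect the validity of your proof.
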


\begin{proof}
Since each $E_{\mathcal{F}(X_\alpha)}$ has infinitely many vertices and satisfies Condition (MT3),  we have that
$\widehat{L_K(E)}$ is prime by Theorem \ref{primenesstheorem} and Lemma \ref{conditionspasstowidehatRlemma}(1).  Since $E_{\mathcal{F}(X_\alpha)}$ is acyclic we get $L_K(E_{\mathcal{F}(X_\alpha)})$ is von Neumman regular by \cite[Theorem 1]{AR}, and thus $\widehat{L_K(E_{\mathcal{F}(X_\alpha)})}$ is von Neumann regular by Lemma \ref{conditionspasstowidehatRlemma}(3).  Finally, Lemma \ref{conditionspasstowidehatRlemma}(2) along with Proposition \ref{notCSPimpliesnotprimitiveprop} yields that $\widehat{L_K(E_{\mathcal{F}(X_\alpha)})}$ is not primitive.

That the algebras are pairwise nonisomorphic follows from Lemma \ref{cardinalitylemma} and the fact that $\widehat{L_K(E_{\mathcal{F}(X_\alpha)})}$ is a one-dimensional extension of $L_K(E_{\mathcal{F}(X_\alpha)})$.
\end{proof}

Arguing in exact analogy to  the proof of Theorem \ref{unitalexamplesEsubTsubX}, we achieve our final class of examples.

\begin{thm}\label{unitalexamplesEsubkappa}
Let $K$ be any field, and $\kappa$ any ordinal with uncountable cofinality.
 Then $\widehat{L_K(E_\kappa)}$ is a unital, prime, non-primitive, von Neumann regular ring.

 In particular, let $\{\kappa_\alpha \  | \ \alpha \in \mathcal{A}\}$ denote a collection of  uncountable ordinals, each of uncountable cofinality, and each of different cardinality.     Then the collection $\{\widehat{L_\mathbb{C}(E_{\kappa_\alpha}}) \  | \ \alpha \in \mathcal{A}\}$ consists of pairwise nonisomorphic, unital, prime, non-primitive, von Neumann regular $\mathbb{C}$-algebras.
\end{thm}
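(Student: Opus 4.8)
The plan is to follow the proof of Theorem \ref{unitalexamplesEsubTsubX} verbatim, replacing the graph $E_{\mathcal{F}(X)}$ by $E_\kappa$ throughout, since every input used there has an $E_\kappa$-analogue already recorded in the excerpt. First I would isolate the three graph-theoretic facts about $E_\kappa$ that drive everything. The graph $E_\kappa$ is acyclic, because every edge $e_{\alpha,\beta}$ has $s(e_{\alpha,\beta})=\alpha < \beta = r(e_{\alpha,\beta})$, so any path strictly increases the underlying ordinal and no cycle can occur; thus Condition (L) holds vacuously. It satisfies Condition (MT3): given $\alpha,\beta<\kappa$, the ordinal $u=\max(\alpha,\beta)$ satisfies $u<\kappa$ and admits paths (or equalities) from both $\alpha$ and $\beta$, so $\alpha\geq u$ and $\beta\geq u$. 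Finally, $E_\kappa^0=\{\alpha\mid \alpha<\kappa\}$ has cardinality $|\kappa|$, which is infinite since $\kappa$ is uncountable, and most importantly, by Proposition \ref{EsubkappahasCSPiffkappacountablecofinalityprop}, $E_\kappa^0$ fails to have CSP precisely because $\kappa$ has uncountable cofinality.

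Next I would translate these into ring-theoretic properties of $R:=L_K(E_\kappa)$. Condition (MT3) together with Theorem \ref{primenesstheorem} gives that $R$ is prime; acyclicity together with \cite[Theorem 1]{AR} gives that $R$ is von Neumann regular; the failure of CSP on $E_\kappa^0$ together with Proposition \ref{notCSPimpliesnotprimitiveprop} gives that $R$ is not primitive; and the infinitude of $E_\kappa^0$ forces $R$ to be nonunital (a Leavitt path algebra is unital only when $E^0$ is finite). I would then pass to the unitization by invoking Lemma \ref{conditionspasstowidehatRlemma}: part (1) applied to the prime nonunital $R$ yields that $\widehat{R}$ is prime; part (3) yields that $\widehat{R}$ is von Neumann regular; and part (2), in its contrapositive form, uses that $R$ is not primitive to conclude that $\widehat{R}$ is not primitive. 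This establishes that $\widehat{L_K(E_\kappa)}$ is a unital, prime, non-primitive, von Neumann regular ring.

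For the ``in particular'' clause I would argue nonisomorphism by a dimension count. By Lemma \ref{cardinalitylemma}(2) we have $\dim_K(L_K(E_{\kappa_\alpha}))=|\kappa_\alpha|$, and since $\widehat{L_K(E_{\kappa_\alpha})}$ is a one-dimensional extension of $L_K(E_{\kappa_\alpha})$ with $|\kappa_\alpha|$ infinite, the cardinal $\dim_K(\widehat{L_K(E_{\kappa_\alpha})})=|\kappa_\alpha|$ is an isomorphism invariant that distinguishes the members of the family, as the $\kappa_\alpha$ are chosen of pairwise distinct cardinality.

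The content of this theorem is essentially bookkeeping: all of the real work has already been absorbed into Proposition \ref{notCSPimpliesnotprimitiveprop} (non-primitivity in the absence of CSP) and into the passage-to-unitization Lemma \ref{conditionspasstowidehatRlemma}. Consequently I do not anticipate a genuine obstacle, only two points demanding a little care. The first is the correct orientation of the order relation on $E_\kappa$, ensuring that Condition (MT3) is verified in the direction the definition requires. The second, and the sole place where the \emph{cofinality} hypothesis (rather than mere uncountability) is used, is the failure of CSP, which is exactly the hinge on which non-primitivity turns; it is worth flagging explicitly so that the role of uncountable cofinality is not conflated with the role of the cardinality of $\kappa$, the latter entering only in the nonisomorphism argument.
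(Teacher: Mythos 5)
Your proposal is correct and takes exactly the paper's approach: the paper proves this theorem simply by declaring it to be ``in exact analogy'' to the proof of Theorem \ref{unitalexamplesEsubTsubX}, and your argument is precisely that analogy carried out, with the $E_\kappa$-specific inputs (acyclicity, Condition (MT3) via $u=\max(\alpha,\beta)$, failure of CSP via Proposition \ref{EsubkappahasCSPiffkappacountablecofinalityprop}, and the dimension count from Lemma \ref{cardinalitylemma}(2)) correctly supplied. Your explicit flagging of the orientation of $\geq$ on $E_\kappa$ and of the distinct roles of cofinality versus cardinality is accurate and, if anything, more careful than the paper's one-line proof.
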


\begin{rem}\label{remarkthatunitalclassesarenotthesameuptoisomorphism}
{\rm  As was done for the two classes of nonunital algebras produced at the beginning of this Section, we observe here that the unital algebras  described in Theorems \ref{unitalexamplesEsubTsubX} and  \ref{unitalexamplesEsubkappa} are different up to isomorphism one from the other; that is, $\widehat{L_K(E_{\mathcal{F}(X_\alpha)})} \not\cong \widehat{L_K(E_\kappa)}$ for any uncountable set $X$ and any uncountable ordinal $\kappa$.  This follows from Remark \ref{remarkthatclassesarenotthesameuptoisomorphism} together with  the fact that the maximal ideal structure of $\widehat{R}$ is completely determined by the maximal ideal structure of $R$ (see e.g. \cite[Corollary 18]{Mes2}.
}
\end{rem}

Immediately after posing the aforementioned ``{\it Is a regular prime ring necessarily primitive?}" question in \cite{K},  Kaplansky continued:  ``{\it It seems unlikely that the answer is affirmative, but a counter-example may have to be weird.}"  While {\it weirdness} is certainly in the eye of the beholder, we believe that the examples  of prime non-primitive regular algebras presented in this section indeed arise quite naturally.

\begin{rem}
{\rm
In addition to the graphs of the form $E_{\mathcal{F}(X_\alpha)}$ and $E_\kappa$ described above, there are many additional classes of graphs germane in the current context.  For instance, let $X$ be any infinite set, and let $\mathcal{T}_X$ denote the collection  of those subsets $A$ of $X$ for which ${\rm card}(A) < {\rm card}(X)$.    We  define a graph $E = E_{\mathcal{T}_X}$ as follows:
$$E_{\mathcal{T}_X}^0 \ = \ \mathcal{T}_X, \ E_{\mathcal{T}_X}^1 = \{e_{A,A'} \ | \ A,A'\in \mathcal{T}_X, \ \mbox{and} \ A\subsetneqq A'\},$$
  $s(e_{A,A'})=A$, and $r(e_{A,A'})=A'$ for each $e_{A,A'} \in E_{\mathcal{T}_X}^1.$   Then $E$ is acyclic and satisfies Condition (MT3);  and $E$ has CSP if and only if $X$ is at most countable.
 \hfill  $\Box$
 }
\end{rem}

We conclude with the following observation.  For any graph $E$ and field $K$ the {\it Cohn path algebra} $C_K(E)$ is the $K$-algebra having the same generators and relations as those given in Definition \ref{definition} for the Leavitt path algebra $L_K(E)$, {\it except for} the (CK2) relation. (The Cohn path algebras are the natural generalizations to graphs of the algebras $U_{1,n}$ defined in \cite[Section 5]{C}.)  We note that for  graphs $E$ of the form $E_{\mathcal{F}(X)}$ or $E_\kappa$,  the Leavitt path algebra $L_K(E)$ and Cohn path algebra $C_K(E)$ coincide, as each vertex $v$ in $E$ has $|s^{-1}(v)| = \infty$. Thus the examples of prime non-primitive von Neumann regular rings given in Theorems \ref{primenotprimitivevNrthmusingEsubTsubX} and \ref{primenotprimitivevNrthmusingEsubkappa} may be interpreted as arising from the Cohn path algebra construction as well.

\end{document}